\documentclass[20pt, oneside]{article}   	
\usepackage{geometry}                		
\usepackage{amsmath}
\usepackage{amsthm}	
\usepackage{amssymb}
\usepackage{tikz}
\usepackage[implicit]{hyperref}
\usepackage[bottom]{footmisc}
\usepackage[affil-it]{authblk}
\usepackage[toc,page]{appendix}
\newtheorem{thm}{Theorem}[section]

\newtheorem{prop}[thm]{Proposition}

\newtheorem{lem}[thm]{Lemma}
\newtheorem{cor}[thm]{Corollary}
\newtheorem*{claim*}{Claim}

\newtheorem*{thm*}{Theorem}
\newtheorem*{corr}{Corollary}

\newtheorem{thmm}{Main Theorem}

\theoremstyle{definition}
\newtheorem*{rem*}{Remark}
\newtheorem{defi}[thm]{Definition}
\newtheorem{rem}[thm]{Remark}

\newcommand{\F}{\mathbb{F}}
\newcommand{\fl}{\mathfrak{l}}

\newcommand{\fp}{\mathfrak{p}}
\newcommand{\Gal}{\mathrm{Gal}}

\newcommand{\GL}{\mathrm{GL}}
\newcommand{\cO}{\mathcal{O}}

\newcommand{\xdownarrow}[1]{%
  {\left\downarrow\vbox to #1{}\right.\kern-\nulldelimiterspace}
}

\title{Frobenius Traces for Rank-2 Drinfeld Modules, Higher-Dimensional Galois Representations, and a Strong Multiplicity One Theorem in Positive Characteristic}
\author{Chien-Hua Chen \thanks{Electronic address: \texttt{cc45@aub.edu.lb}; ORCID: \texttt{0000-0003-3267-5603} ; Corresponding author}}
\affil{}

\begin{document}

\maketitle
\abstract

In this paper, we prove that if the Frobenius traces agree at all but finitely many places, then two $l$-adic Galois representations, associated to rank-$2$ non-CM Drinfeld modules of generic characteristic, are isomorphic. As a generalization, we show that  the "Frobenius trace equality at all but finitely many places forces isomorphism" between two Galois representations over a local field of positive characteristic only holds under an absolute irreducibility assumption. Moreover, we formulate and prove a function field analogue of strong multiplicity one property for semisimple Galois representations over a local field of positive characteristic.

\section {Introduction}

In 1950's, the well-known Brauer-Nesbitt theorem implies that two semisimple Galois representations with image in $\GL_n$ over a field of characteristic $0$ are isomorphic if and only if the two representations have the same Frobenius trace for all but finitely many places. As an application to representations associated to elliptic curves, we can conclude that Frobenius traces can distinguish elliptic curves up to isogeny. In positive characteristic case, however, this principle becomes more subtle. The classical argument relies on Newton identities to reconstruct characteristic polynomials from traces, which requires divisibility conditions that might fail in characteristic $p$. As a result, Frobenius traces alone do not, in general, determine representations up to isomorphism. Understanding when such a determination remains possible then becomes a natural problem.

Our first main result focuses on the representations arising from Drinfeld modules. Under the specialization to $\fl$-adic Galois representations associated to Drinfeld modules, the Weil pairing gives a  relation between Frobenius determinant and Frobenius image of a representation associated to certain twisted Carlitz module. We then apply this relation to bypass the Newton identities to achieve the same goal as the elliptic curve case. 

\begin{thmm}[Theorem \ref{mainthm1}]
Let $q=p^e$ be a prime power, $A:=\F_q[T]$ with field of fraction $F:=\F_q(T)$. Let $\phi_1$, $\phi_2$ be two non-CM rank-$2$ Drinfeld modules over $K$, a finite extension of $F$, of $A$-characteristic $0$. Let $\fl$ be a prime ideal of $A$ and $\rho_{i,\fl}:\Gal(\bar{K}/K)\rightarrow \GL_2(F_\fl)$  be the $\fl$-adic Galois representations associated to $\phi_i$ for $i=1,2$. 

Suppose that the Frobenius traces ${\rm{tr}}(\rho_{i,\fl}({\rm Frob}_{v}))$ are equal for all but finitely many places $v$ of $K$, then $\phi_1$ and $\phi_2$ are $K$-isogenous Drinfeld modules. Consequently, $\rho_{1,\fl}\cong\rho_{2,\fl}$.
\end{thmm}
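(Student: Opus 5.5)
The plan is to reduce to the Tate conjecture for Drinfeld modules (Taguchi, Tamagawa). By that result,
\[
\mathrm{Hom}_K(\phi_1,\phi_2)\otimes_A A_\fl\cong \mathrm{Hom}_{\Gal(\bar K/K)}(T_\fl\phi_1,T_\fl\phi_2),
\]
so it suffices to show that $\rho_{1,\fl}\cong\rho_{2,\fl}$ as $F_\fl$-representations. A nonzero Galois-equivariant map $V_\fl\phi_1\to V_\fl\phi_2$ already yields a nonzero isogeny over $K$. Since both modules are non-CM, Pink's open image theorem makes each $\rho_{i,\fl}$ absolutely irreducible. The whole task is therefore to recover the characteristic polynomial of Frobenius from its trace, which Newton's identities cannot do in characteristic $p$.

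\textbf{Step 1 (determinants agree).} Let $v$ be a place of good reduction for both $\phi_i$, not lying over $\fl$. By the Weil pairing (Anderson, van der Heiden), $\det\rho_{i,\fl}$ is the $\fl$-adic representation of a rank-one Drinfeld module $\psi_i$, which is a twist of the Carlitz module determined by the leading coefficient $\Delta_i$ of $(\phi_i)_T$. Its Frobenius determinant equals $\epsilon_i(v)\,\mathfrak p_v$, where $\mathfrak p_v$ is the monic generator of the norm of $v$ to $A$ and $\epsilon_i(v)\in\F_q^\times$ is a character value. Concretely, $\epsilon_i(v)$ is the norm of the residue of $\Delta_i$, up to a fixed sign or power, computed via Gekeler's formula $\det = \epsilon^{-1}\mathfrak p_v$ for the characteristic polynomial $X^2-a_vX+\epsilon\mathfrak p_v$.

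Thus $\det\rho_{1,\fl}\cdot(\det\rho_{2,\fl})^{-1}$ is a finite-order character $\chi$ with values in $\F_q^\times$. It corresponds to the Kummer extension $K((\Delta_1/\Delta_2)^{1/(q-1)})$. I will show that $\chi$ is trivial, possibly after passing to a finite extension.

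\textbf{Step 2 (using traces).} Let $L/K$ be the finite extension cut out by $\chi$. Consider the mod-$\fl$ or $\fl$-adic representations restricted to $\Gal(\bar K/L)$. There the determinants agree, and since the traces agree too, the characteristic polynomials agree at almost all places. Chebotarev density together with Brauer–Nesbitt, applied to characteristic polynomials rather than traces, then gives $\rho_{1,\fl}|_{G_L}\cong\rho_{2,\fl}|_{G_L}$. Absolute irreducibility persists on the open subgroup $G_L$, by Pink's theorem again. Hence $\rho_{2,\fl}\cong\rho_{1,\fl}\otimes\eta$ for some character $\eta$ of $\Gal(L/K)$. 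Comparing determinants gives $\eta^2=\chi$, and comparing traces gives $a_v(\eta(v)-1)=0$ for almost all $v$. Since the image of $\rho_{1,\fl}$ is open in $\GL_2(A_\fl)$, the set of Frobenius elements with $a_v\ne0$ has density one. Therefore $\eta(v)=1$ for a density-one set of $v$, so $\eta=1$. Then $\rho_{1,\fl}\cong\rho_{2,\fl}$, and the Tate conjecture gives the $K$-isogeny.

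\textbf{Main obstacle.} The delicate point is the density-one claim for $a_v\neq0$. This requires that $\{g\in\rho_{1,\fl}(G_K):\mathrm{tr}\,g=0\}$ has Haar measure zero, which holds because it is a proper analytic subset of an open subgroup. One must combine this with Chebotarev for $\rho_1\times\eta$.

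Step 1 is also somewhat delicate, since it requires the precise determinant formula; however, Step 2 does not actually need it, because it only uses that $\chi$ has finite order. The one place I need Step 1 is to ensure that $\chi$ takes values in $\F_q^\times$, so that $L/K$ is finite and Chebotarev applies.
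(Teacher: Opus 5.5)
Your proposal is correct and follows essentially the same route as the paper. The Weil pairing makes the determinants agree over a Kummer extension $L/K$, Brauer--Nesbitt gives $\rho_{1,\fl}|_{G_L}\cong\rho_{2,\fl}|_{G_L}$, a Schur-type argument (the paper's Lemma~\ref{1coho}) yields $\rho_{2,\fl}\cong\rho_{1,\fl}\otimes\eta$, trace equality kills $\eta$, and Tate's isogeny theorem finishes. The differences are minor: you work at $\fl$ itself via Pink's open image theorem rather than passing to an auxiliary prime $\tilde{\fl}$, and your Haar-measure/density argument for $a_v\neq 0$ justifies a step the paper only asserts, namely that a nontrivial twist would force the traces to vanish almost everywhere.
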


As a generalization of our Main Theorem 1, we show in Proposition \ref{Main1} that for continuous representation over local field of positive characteristic, ``Frobenius trace equality for all but finitely many places$\Rightarrow$ two representations are isomorphic'' still  works when one representation is absolutely irreducible and the other one is irreducible. Therefore, apply Proposition \ref{Main1} to $\fl$-adic Galois representation associated to rank-$r$ non-CM Drinfeld modules, we get the corollary below.

\begin{corr}[Corollary \ref{corr}]
Let $q=p^e$ be a prime power, $A:=\F_q[T]$ with field of fraction $F:=\F_q(T)$. Let $r\in \mathbb{Z}_{>0}$, and $\phi_1$, $\phi_2$ be two non-CM rank-$r$ Drinfeld modules over $K$, a finite extension of $F$, of $A$-characteristic $0$. Let $\fl$ be a prime ideal of $A$. Let $\rho_{i,\fl}:\Gal(\bar{K}/K)\rightarrow \GL_r(F_\fl)$  be the $\fl$-adic Galois representations associated to $\phi_i$ for $i=1,2$. 

Suppose that the Frobenius traces ${\rm{tr}}(\rho_{i,\fl}({\rm Frob}_{v}))$ are equal for all but finitely many places $v$ of $K$, then
$\phi_1$ and $\phi_2$ are $K$-isogenous and so $\rho_{1,\fl}\cong\rho_{2,\fl}$.

\end{corr}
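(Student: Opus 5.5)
The plan is to deduce the Corollary from Proposition \ref{Main1} together with the Tate conjecture for Drinfeld modules (the isogeny theorem of Taguchi and Tamagawa). So the work reduces to two checks: verify the hypotheses of Proposition \ref{Main1} for $\rho_{1,\fl}$ and $\rho_{2,\fl}$, then pass from an isomorphism of Galois representations to an isogeny of Drinfeld modules.

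\textbf{Step 1: absolute irreducibility.} Since $\phi_i$ is non-CM of generic characteristic, $\mathrm{End}_{\bar K}(\phi_i)=A$. By Pink's open image theorem (or Pink--R\"utsche), the image of $\rho_{i,\fl}$ is open in $\GL_r(A_\fl)$. In particular this image spans $M_r(F_\fl)$ as an $F_\fl$-algebra. Hence $\rho_{i,\fl}$ is absolutely irreducible, and a fortiori irreducible.

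If one prefers to avoid the full open image theorem, there is a weaker route. The Tate conjecture over $K'$ gives $\mathrm{End}_{F_\fl[G_{K'}]}(V_\fl\phi_i)=\mathrm{End}_{K'}(\phi_i)\otimes_A F_\fl=F_\fl$ for every finite extension $K'/K$. Together with the semisimplicity of $V_\fl\phi_i$ (Taguchi), this gives absolute irreducibility. I expect to simply cite Pink.

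Each $\rho_{i,\fl}$ is continuous and unramified outside the finitely many places of bad reduction and those above $\fl$. So the trace hypothesis is exactly the one in Proposition \ref{Main1}, and we conclude $\rho_{1,\fl}\cong\rho_{2,\fl}$.

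\textbf{Step 2: from representations to isogeny.} Apply the Tate isogeny theorem for Drinfeld modules over finitely generated fields of generic characteristic (Taguchi, Tamagawa):
\[
\mathrm{Hom}_K(\phi_1,\phi_2)\otimes_A A_\fl \;\xrightarrow{\ \sim\ }\; \mathrm{Hom}_{A_\fl[G_K]}(T_\fl\phi_1,T_\fl\phi_2).
\]
After tensoring with $F_\fl$, the right-hand side contains the isomorphism produced in Step 1, so it is nonzero. Therefore $\mathrm{Hom}_K(\phi_1,\phi_2)\neq 0$.

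Any nonzero morphism between Drinfeld modules of the same rank $r$ has finite kernel. Its kernel is finite because a nonzero morphism $u$ with $u\phi_{1,a}=\phi_{2,a}u$ forces the ranks to agree and $u$ to be an isogeny. So $\phi_1$ and $\phi_2$ are $K$-isogenous. The isomorphism $\rho_{1,\fl}\cong\rho_{2,\fl}$ then holds both from Step 1 and directly from the isogeny.

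\textbf{Main obstacle.} The real content is Proposition \ref{Main1} itself, which we are allowed to assume. Within the Corollary, the only delicate point is justifying absolute irreducibility rather than mere irreducibility, since over $F_\fl$ these can differ in principle. This is exactly where non-CM is used: a CM factor would make $\mathrm{End}$ larger, and $V_\fl\phi_i$ would split over an extension of $F_\fl$.
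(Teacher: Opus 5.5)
Your argument is correct and has the same skeleton as the paper's proof: the open image theorem gives (absolute) irreducibility, then Proposition \ref{Main1} gives the isomorphism, then the Tate isogeny theorem gives the isogeny.

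The one structural difference is which prime you work at. The paper does not stay at the given $\fl$. It uses the $\fl$-independence of the Frobenius characteristic polynomials to transfer the trace equality to an auxiliary prime $\tilde{\fl}$ of large degree. There it obtains $\rho_{1,\tilde{\fl}}$ absolutely irreducible and $\rho_{2,\tilde{\fl}}$ irreducible from Pink--R\"utsche, and applies Proposition \ref{Main1} and Tate's theorem at $\tilde{\fl}$. It then recovers $\rho_{1,\fl}\cong\rho_{2,\fl}$ only through the resulting isogeny.

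You instead use that Pink's theorem already gives an open image in $\GL_r(A_\fl)$ at every prime $\fl$ once $\mathrm{End}_{\bar K}(\phi_i)=A$. So the change of prime and the $\fl$-independence input are unnecessary, and you get absolute irreducibility of both representations directly at $\fl$. The paper's detour would only matter if large image were available just for almost all primes.

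Your fallback via the Tate conjecture (Taguchi, Tamagawa) plus Taguchi's semisimplicity is also valid. It needs only $\mathrm{End}_K(\phi_i)\otimes_A F_\fl=F_\fl$ over $K$ itself, and it avoids the open image theorem entirely.

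One small wording fix: a nonzero morphism $u\in K\{\tau\}$ has finite kernel simply because it is a nonzero additive polynomial. That is what makes it an isogeny (and forces the ranks to agree), not the other way round.
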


Beyond the Brauer-Nesbitt theorem, there is a finer result by Rajan (Theorem 1\&2 in \cite{Ra98}) on what will happen if there are two semisimple Galois representations with image in $\GL_n$ over a field of characteristic $0$ whose Frobenius traces are equal for a positive Dirichlet density of places. To formulate a function field analogue of the result by Rajan, merely looking at Frobenius traces is not enough, hence we turn back to compare Frobenius characteristic polynomials between two representations. Our approach combines an algebraic form of Chebotarev density with a componentwise thinness condition on the Zariski closure of the image. Under this framework, we obtain criteria ensuring that two representations become isomorphic after restriction to a finite extension. Under additional hypotheses, the two representations differ only by a finite character twist.

\begin{thmm}[Theorem \ref{thmb}]
Let $K$ be a global function field, and  $\rho_1,\rho_2:G_K\rightarrow \GL_n(E)$ be two semisimple Galois representations over a local field $E$ of positive characteristic. Denote that
\begin{itemize}
\item $\rho:=(\rho_1,\rho_2):G_K\rightarrow \GL_r(E)\times\GL_r(E).$
\item$ H:=\overline{\rho(G_K)}^{\rm Zar}\subset \GL_r\times\GL_r$ is the Zariski closure of $\rho(G_K)$ in the affine algebraic group $\GL_r\times\GL_r$ over $E$.
\item $\Phi:=H/H^\circ$, where $H^\circ$ is the identity component of $H$.
\item $S:=\{v\in\Sigma_K\mid \rho\ {\rm unramified\ at\ }v\}$
\item$X:=\left\{(g_1,g_2)\in \GL_r\times\GL_r\mid \chi_{\rm char}(g_1)=\chi_{\rm char}(g_2) \right\}$, where $\chi_{\rm char}(g)$ is the characteristic polynomial of $g$. 
\item $\Sigma_X:=\{v\in S\mid \rho({\rm Frob}_v)\in X\}$
\end{itemize}

Suppose $H^\circ\subset X$, then we have the followings:
\begin{enumerate}
\item[(i)] There is a finite extension $L/K$ such that $\rho_1|_{G_L}\cong\rho_2|_{G_L}$
\item[(ii)]  If $\rho_1$ is absolutely irreducible and $\overline{\rho_1(G_K)}^{\rm Zar}\subset \GL_r$ is connected, then there is a finite character $\chi:G_K\rightarrow E^*$ such that $\rho_2\cong\rho_1\otimes \chi$.
\end{enumerate}
\end{thmm}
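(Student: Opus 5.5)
Take $L$ to be the fixed field of the open subgroup $G' := \rho^{-1}(H^\circ(E)) \subset G_K$; then $L/K$ is finite. We use the paper's standing facts about $\rho$: it is continuous, $H^\circ$ is Zariski-closed of finite index in $H$, and $\rho(G_K)$ is Zariski dense in $H$. From these, $\rho(G')$ is Zariski dense in $H^\circ$ and $[G_K:G']\le|\Phi|$. The assumption $H^\circ\subset X$ then gives, for every $\sigma\in G_L$,
\[\chi_{\rm char}(\rho_1(\sigma))=\chi_{\rm char}(\rho_2(\sigma)).\]
This holds for all of $G_L$, not only Frobenius elements, so no Chebotarev input is needed for (i). In particular the full characteristic polynomials agree, so all coefficients agree, including $\mathrm{tr}\wedge^k$ for each $k$. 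This is exactly what sidesteps the Newton-identity problem in characteristic $p$.

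For (i) we conclude as follows. The restrictions $\rho_i|_{G_L}$ are still semisimple, since $G_L$ has finite index in $G_K$; this is Clifford theory, and in characteristic $p$ it needs care. If the characteristic argument fails, we instead take semisimplifications. Two semisimple representations of a group over a field with identical characteristic polynomials everywhere are isomorphic. This is the Brauer–Nesbitt theorem in the form "equal characteristic polynomials," which is valid in any characteristic. We would cite it, or prove it by passing to $\bar E$ and comparing Jordan–Hölder factors via the linear independence of characters of the group algebra on $\wedge^k$.

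For (ii), connectedness of $\overline{\rho_1(G_K)}^{\rm Zar}$ is used first. The projection $\mathrm{pr}_1(H^\circ)$ is connected of finite index in $\mathrm{pr}_1(H)=\overline{\rho_1(G_K)}^{\rm Zar}$, so it equals it. Since $\rho_1(G_L)$ is dense in $\mathrm{pr}_1(H^\circ)$, $\rho_1|_{G_L}$ has the same Zariski closure as $\rho_1$. It is therefore still absolutely irreducible, because invariant subspaces are preserved by the Zariski closure. By (i), $\rho_2|_{G_L}\cong\rho_1|_{G_L}$. Hence $\mathrm{Hom}_{G_L}(\rho_1,\rho_2)$ is one-dimensional, spanned by some isomorphism $f$, using Schur's lemma and absolute irreducibility. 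For $g\in G_K$, the map $\rho_2(g)f\rho_1(g)^{-1}$ is again $G_L$-equivariant, because $G_L$ is normal. (If $G'$ is not normal we replace $L$ by its Galois closure, which preserves everything above.) So $\rho_2(g)f\rho_1(g)^{-1}=\chi(g)f$ with $\chi(g)\in E^*$, and $\chi$ is a continuous character trivial on $G_L$, hence finite. This gives $\rho_2\cong\rho_1\otimes\chi$.

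The main obstacle is step (i)'s passage from equal characteristic polynomials to isomorphism in characteristic $p$, together with semisimplicity of restrictions when $p$ divides $[L:K]$. To handle semisimplicity, I would try to bound the index: choose $L$ so that restriction is harmless, or argue via semisimplicity of the Zariski closure's action. A semisimple representation has a reductive (or at least completely reducible) Zariski closure. Complete reducibility of a connected normal subgroup's action then follows because $H^\circ$-isotypic decomposition works for algebraic groups without characteristic hypotheses in the completely reducible setting. For (ii), only $\rho_1|_{G_L}$ absolutely irreducible is needed, and that has already been secured.
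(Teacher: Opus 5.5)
Your proposal is correct and takes essentially the same route as the paper. $L$ is cut out by $G_K\to H(E)/H^\circ(E)$, and Brauer--Nesbitt on the restrictions to $G_L$ gives (i); the paper applies it to $p_i|_{H^\circ}$ and then restricts, which amounts to the same thing since $\rho(G_L)$ is Zariski dense in $H^\circ$. For (ii), connectedness plus absolute irreducibility make $\mathrm{Hom}_{G_L}(V_1,V_2)$ one-dimensional, so $\rho_2(g)f\rho_1(g)^{-1}=\chi(g)f$ defines a finite character.

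The semisimplicity step you flag as the main obstacle is not one. $G_L=\rho^{-1}(H^\circ(E))$ is automatically normal in $G_K$ because $H^\circ\lhd H$. Clifford's theorem for a normal subgroup holds in every characteristic: an irreducible $V$ is the sum of the translates $gW$ of any irreducible $G_L$-submodule $W$. So the right justification is normality, not finite index, which alone would not suffice in characteristic $p$. You therefore need neither the Galois-closure step nor the semisimplification fallback, and the latter would not yield (i) anyway. This argument also supplies a step that the paper only asserts.
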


Besides, we note in Remark \ref{rem1} the difference between the positive characteristic case and Rajan's result (Theorem 2) in \cite{Ra98}. In Remark \ref{rem2}, we apply our strong multiplicity one property for the situation where the two representations $\rho_1, \rho_2$ are $\fl$-adic Galois representations associated to non-CM Drinfeld modules of generic characteristic. 

\section*{Acknowledgement}

The main topic of this paper was originally raised by Professor Mihran Papikian to the author. He would like to thank Professor Mihran Papikian  for his helpful and inspiring discussions.

\section{Preliminaries}

\subsection{Brauer-Nesbitt Theorem and its related results}

\begin{thm}[Brauer-Nesnbitt]\label{BN}

Let $G$ be a group and $E$ be a field. If $\rho_1, \rho_2: G\rightarrow {\rm GL}_n(E)$ are two finite dimensional semisimple representations such that the characteristic polynomial of $\rho_1(g)$ and $\rho_2(g)$ coincide for all $g\in G$. Then $\rho_1$ and $\rho_2$ are isomorphic representations. 

\end{thm}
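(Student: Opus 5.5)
The plan is to pass from the group $G$ to the group algebra $E[G]$ and compare the images there. First I reduce to the case where $E$ is algebraically closed. A semisimple representation over $E$ remains semisimple over $\bar E$ when $E$ is perfect. In general one should replace "semisimple" by "absolutely semisimple"; alternatively, one can note that an isomorphism over $\bar E$ descends to an isomorphism over $E$ by the Noether--Deuring theorem. I will apply Noether--Deuring: two $E[G]$-modules that become isomorphic after extending scalars to $\bar E$ are already isomorphic over $E$. For the imperfect-field subtlety I will fall back on the standard formulation in which "semisimple" means semisimple after base change to $\bar E$.

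Over $\bar E$, decompose $\rho_1 \oplus \rho_2$ into its distinct irreducible constituents $\sigma_1, \dots, \sigma_k$, and write
\[
\rho_1 \cong \bigoplus_j \sigma_j^{m_j}, \qquad \rho_2 \cong \bigoplus_j \sigma_j^{n_j}.
\]
By Jacobson density, applied via the Artin--Wedderburn theorem, the map $\bar E[G] \to \prod_j \mathrm{End}(\sigma_j)$ is surjective. Hence for each $j$ there is an element $e_j \in \bar E[G]$ acting as the identity on $\sigma_j$ and as $0$ on $\sigma_i$ for $i \neq j$. Moreover, $\bar E[G]$ contains, for each $j$, an element acting on $\sigma_j$ as a rank-one idempotent and as $0$ on the other constituents.

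The key step is to upgrade the hypothesis from group elements to the whole group algebra, and then extract the multiplicities. Equality of the characteristic polynomials of $\rho_1(g)$ and $\rho_2(g)$ for every $g$ does not directly give equality of characteristic polynomials for elements of $\bar E[G]$, because the characteristic polynomial is not linear. Traces, however, are linear, so equality of traces extends to all of $\bar E[G]$. In characteristic $0$, the trace of the rank-one idempotent gives $m_j = n_j$ directly. In characteristic $p$, the same computation only gives $m_j \equiv n_j \pmod p$, and this is where the full characteristic polynomial is needed.

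To get exact equality I will use the coefficients of the characteristic polynomial as polynomial functions. The identity $\det(1 + t\rho_1(x)) = \det(1 + t\rho_2(x))$ holds for all $x \in G$. I want it for all $x$ in the span. Consider $x = \sum_g a_g g$ with the $a_g$ formal variables. Both sides are then polynomials in the $a_g$ and $t$, but knowing equality only at basis vectors does not determine these polynomials, so this naive extension fails.

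Instead I will argue by induction on $\dim \rho_1$ using eigenvalue multisets. Pick a constituent $\sigma = \sigma_j$ and an element $g$ of the group. The characteristic polynomial of $\rho_1(g)$ is the product of the polynomials $\chi_{\sigma_i(g)}^{m_i}$, and similarly for $\rho_2$. Comparing these factorizations alone does not isolate the $m_i$, so I plan to use the standard trick of cancelling common constituents. Let $\tau = \bigoplus_i \sigma_i^{\min(m_i, n_i)}$. Then $\rho_1 = \tau \oplus \rho_1'$ and $\rho_2 = \tau \oplus \rho_2'$, where $\rho_1'$ and $\rho_2'$ share no constituent. For every $g$, characteristic polynomials divide exactly in $\bar E[X]$, so $\rho_1'(g)$ and $\rho_2'(g)$ also have equal characteristic polynomials; in particular they have the same dimension and equal traces on all of $\bar E[G]$.

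Now suppose $\rho_1' \neq 0$ and take a constituent $\sigma$ of $\rho_1'$ of multiplicity $m$. Choose $e \in \bar E[G]$ acting as the identity on $\sigma$ and as $0$ on all other constituents. Then $e$ acts as $0$ on $\rho_2'$, while the elements $1 + t e$ need not lie in $G$. This is the main obstacle: passing from group elements to algebra elements. To overcome it I would apply the equality of characteristic polynomials of $\rho_1'(g)$ and $\rho_2'(g)$ in the form of equality of all exterior-power traces, $\mathrm{tr}\,\Lambda^k \rho_1'(g) = \mathrm{tr}\,\Lambda^k \rho_2'(g)$ for every $k$. By linearity these extend to $\mathrm{tr}\,\Lambda^k \rho_1'(x) = \mathrm{tr}\,\Lambda^k \rho_2'(x)$ for all $x \in \bar E[G]$, where $\Lambda^k$ is viewed as a representation of the group algebra. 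Since the $\Lambda^k \rho_i'$ are semisimple after base change to $\bar E$, choose $k = \dim \sigma$ and the top wedge. Then $\Lambda^{k}\sigma$ is a character, and using an idempotent in $\bar E[G]$ projecting onto it gives the coefficient of $e$ in $\Lambda^{\dim\sigma \cdot m}$. Alternatively, and most simply, I would take $k = m \dim \sigma$: the element $\Lambda^{k}(e)$ acts on $\Lambda^k \rho_1'$ with trace exactly $1$ and on $\Lambda^k \rho_2'$ with trace $0$. This contradiction shows $\rho_1' = 0$, so $\rho_1 \cong \rho_2$.
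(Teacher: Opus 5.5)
The paper quotes Brauer--Nesbitt as a classical result and gives no proof, so your argument has to stand on its own. Its decisive last step fails.

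\textbf{The gap.} Linearity lets you extend $\mathrm{tr}\,\Lambda^k\rho_1'(g)=\mathrm{tr}\,\Lambda^k\rho_2'(g)$ from $g\in G$ to $x=\sum_g a_g g\in\bar E[G]$ only for the linearly extended representation, that is, for the operators $(\Lambda^k\rho_i')(x)=\sum_g a_g\,\Lambda^k(\rho_i'(g))$. The traces $1$ and $0$ you compute are those of a different operator, $\Lambda^k(\rho_i'(e))=\Lambda^k\bigl(\sum_g a_g\rho_i'(g)\bigr)$, the exterior power of the projection $\rho_i'(e)$. The map $x\mapsto\Lambda^k(\rho(x))$ is homogeneous of degree $k$, not linear. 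In fact $\det(1+t\rho(x))=\sum_k t^k\,\mathrm{tr}\,\Lambda^k(\rho(x))$, so your exterior-power reformulation is exactly the naive extension you had just rejected.

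A concrete instance: take $G=\mathbb{Z}/3$, $\mathrm{char}\,E\neq 3$, $\omega$ a nontrivial character, $\rho_1'=\omega\oplus\omega$, $e=\frac13\sum_g\omega(g)^{-1}g$, and $k=m\dim\sigma=2$. Then $\mathrm{tr}\,\Lambda^2(\rho_1'(e))=1$, but $(\Lambda^2\rho_1')(e)=\omega^2(e)=0$. So the passage from group elements to the idempotent $e$, which you correctly named as the main obstacle, is never achieved. Separately, your claim that $\Lambda^k\rho_i'$ is semisimple over $\bar E$ is false in characteristic $p$ in general, though a trace argument would not need it.

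\textbf{A repair using what you already have.} When $\mathrm{char}\,E=p>0$, after cancellation $\rho_1'$ and $\rho_2'$ share no constituent. Your trace computation with rank-one idempotents then shows that every multiplicity $m_j$ in $\rho_1'$ and $n_j$ in $\rho_2'$ is divisible by $p$. Put $\rho_1''=\bigoplus_j\sigma_j^{m_j/p}$ and $\rho_2''=\bigoplus_j\sigma_j^{n_j/p}$. Then $\chi_{\rho_i'(g)}=Q_i(g)^p$, where $Q_i(g)$ is the characteristic polynomial of $\rho_i''(g)$. Since $(Q_1-Q_2)^p=Q_1^p-Q_2^p=0$ in the domain $\bar E[X]$, the pair $\rho_1'',\rho_2''$ again has equal characteristic polynomials on $G$ and disjoint constituents, with dimension divided by $p$. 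Induction on the dimension forces $\rho_1'=\rho_2'=0$.

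Alternatively, Amitsur's formula expresses $\det(1-t\rho(x))$ for $x\in E[G]$ through characteristic polynomials of words in group elements. That gives equality of characteristic polynomials on all of $E[G]$, after which the usual idempotent argument goes through.

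\textbf{A secondary issue.} Your fall-back to absolutely semisimple representations genuinely weakens the statement. The paper uses the theorem over local fields of characteristic $p$, which are imperfect, and there semisimple need not imply absolutely semisimple. So Noether--Deuring cannot simply be applied to $\rho_i\otimes\bar E$.

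Instead, run the argument on the $\bar E$-semisimplifications. Both the trace identities and the factorization of characteristic polynomials see only composition factors. Then use the fact that non-isomorphic simple $E[G]$-modules have no common $\bar E$-composition factor, since a central idempotent of the semisimple image algebra separates them. This gives equal $E$-composition factors, and semisimplicity over $E$ yields $\rho_1\cong\rho_2$.
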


Note that the Brauer-Nesbitt theorem is applied to representations in arbitrary characteristic. However, the following well known application is only restricted to the characteristic $0$ case.

\begin{thm}[ \cite{Lan84}, Corollary 3.8]\label{0case}
Let $G$ be a group and $E$ be a field of characteristic $0$. If $\rho_1, \rho_2: G\rightarrow {\rm GL}_n(E)$ are two finite dimensional semisimple representations such that the traces ${\rm tr}(\rho_1(g))$ and ${\rm tr}(\rho_2(g))$ coincide for all $g\in G$. Then $\rho_1$ and $\rho_2$ are isomorphic representations. 

\end{thm}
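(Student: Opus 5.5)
The plan is to reduce the statement to the Brauer--Nesbitt theorem (Theorem \ref{BN}). We show that equality of traces forces equality of characteristic polynomials when $\mathrm{char}(E)=0$. Fix $g\in G$ and let $\alpha_1,\dots,\alpha_n$ and $\beta_1,\dots,\beta_n$ be the eigenvalues, with multiplicity, of $\rho_1(g)$ and $\rho_2(g)$ in an algebraic closure $\bar E$. Since $\rho_i$ is a homomorphism, $\rho_i(g^k)=\rho_i(g)^k$ for every $k\geq 1$. The hypothesis applied to $g^k$ therefore gives the equality of power sums
\[
p_k(\alpha):=\sum_{j=1}^n \alpha_j^k=\mathrm{tr}(\rho_1(g^k))=\mathrm{tr}(\rho_2(g^k))=\sum_{j=1}^n\beta_j^k=:p_k(\beta)
\]
for all $k\geq1$.

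The key step is to recover the elementary symmetric functions from the power sums via Newton's identities. These read
\[
k\,e_k=\sum_{i=1}^{k}(-1)^{i-1}e_{k-i}\,p_i, \qquad e_0=1,
\]
for $1\le k\le n$. Because $E$ has characteristic $0$, the integer $k$ is invertible in $E$. Induction on $k$ then shows that $e_k(\alpha)$ is a polynomial with coefficients in $\mathbb{Q}\subset E$ in $p_1,\dots,p_k$. Hence $e_k(\alpha)=e_k(\beta)$ for $k=1,\dots,n$. These are, up to sign, the coefficients of the characteristic polynomials, so $\chi_{\rm char}(\rho_1(g))=\chi_{\rm char}(\rho_2(g))$.

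Since $g$ was arbitrary, both representations are semisimple, and their characteristic polynomials agree on all of $G$, so Theorem \ref{BN} gives $\rho_1\cong\rho_2$.

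There is no real obstacle. The only delicate point, and exactly where characteristic $0$ is used, is the division by $k\le n$ in Newton's identities. In characteristic $p$ with $p\le n$ this inversion fails, which is why the rest of the paper must bypass it.
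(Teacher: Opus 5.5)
Your argument is correct and follows the paper's route: the paper only cites Lang for this statement, but its sketch of Proposition \ref{pcase} is the same reduction. That sketch recovers each $e_k$ from the power sums ${\rm tr}(\rho_i(g^k))$ via Newton's identities and then applies Theorem \ref{BN}, and in characteristic $0$ every $k\le n$ is invertible, so it goes through without the hypothesis $p>n$.
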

For positive characteristic case, the statement above hold under an additional assumption that ${\rm char}(E)>n$:

\begin{prop}\label{pcase}
Let $G$ be a group $E$ be a field of characteristic $p$ with $p>n$. If $\rho_1, \rho_2: G\rightarrow {\rm GL}_n(E)$ are two finite dimensional semisimple representations such that the traces ${\rm tr}(\rho_1(g))$ and ${\rm tr}(\rho_2(g))$ coincide for all $g\in G$. Then $\rho_1$ and $\rho_2$ are isomorphic representations. \end{prop}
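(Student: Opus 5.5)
The plan is to reduce to the Brauer--Nesbitt theorem (Theorem \ref{BN}): it suffices to show that for every $g\in G$ the characteristic polynomials of $\rho_1(g)$ and $\rho_2(g)$ coincide. The coefficients of the characteristic polynomial of an $n\times n$ matrix $M$ are, up to sign, the elementary symmetric functions $e_1,\dots,e_n$ of its eigenvalues (in an algebraic closure $\bar E$). The traces of powers give the power sums: ${\rm tr}(M^k)=p_k$ is the $k$-th power sum of the eigenvalues. Since $\rho_i(g^k)=\rho_i(g)^k$ and $g^k\in G$, the hypothesis gives ${\rm tr}(\rho_1(g)^k)={\rm tr}(\rho_2(g)^k)$ for all $k\geq 1$. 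So for each fixed $g$ the two matrices have the same power sums $p_1,\dots,p_n$.

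Next I recover $e_1,\dots,e_n$ from $p_1,\dots,p_n$ via the Newton identities
\begin{equation*}
k\,e_k=\sum_{i=1}^{k}(-1)^{i-1}e_{k-i}\,p_i,\qquad 1\le k\le n,\ e_0=1.
\end{equation*}
These are identities in $\mathbb{Z}[x_1,\dots,x_n]$, so they hold after specializing the variables to the eigenvalues in $\bar E$. Inductively, if $e_0,\dots,e_{k-1}$ agree for $\rho_1(g)$ and $\rho_2(g)$, then $k e_k$ agrees. Because $k\le n<p$, $k$ is invertible in $E$, so $e_k$ agrees. This is exactly where the hypothesis $p>n$ enters, and it is the only nontrivial step. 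For $k\ge p$ the division by $k$ fails, which is why the statement is false in general in small characteristic.

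Having shown that $\chi_{\rm char}(\rho_1(g))=\chi_{\rm char}(\rho_2(g))$ for every $g\in G$, the semisimplicity assumption lets us invoke Theorem \ref{BN} to conclude $\rho_1\cong\rho_2$.
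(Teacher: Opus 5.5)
Your proposal is correct and follows the paper's own proof sketch essentially verbatim: use traces of powers ${\rm tr}(\rho_i(g)^k)={\rm tr}(\rho_i(g^k))$ to get equal power sums, recover $e_1,\dots,e_n$ inductively from the Newton identities using that $1\le k\le n<p$ is invertible in $E$, and then apply Theorem \ref{BN} to the semisimple representations. Your inductive formulation (matching $e_0,\dots,e_{k-1}$ and $p_1,\dots,p_k$ forces $k e_k$, hence $e_k$, to match) is in fact a slightly cleaner write-up of the same step.
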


\begin{proof}[sketch of proof]
The key strategy to proof of Proposition \ref{pcase}  is to use Newton identities to deduce that one can rebuild characteristic polynomial of $\rho(g)$ from trace of  powers ${\rm tr}(\rho(g)^k)$. Suppose $\rho_1(g)$ has eigenvalues $\alpha_1,\cdots,\alpha_n$. Set power sums $p_k={\rm tr}(\rho_1(g)^k)$ for $k\geqslant 1$, and symmetric sums $e_k=\Sigma_{1\leqslant i_1\leqslant\cdots\leqslant i_k\leqslant n}\alpha_{i_1}\cdots \alpha_{i_k}$ with $e_0=1$. Let us write down the characteristic polynomial of $\rho_1(g)$ as $P_v(X)=X^n-e_1X^{n-1}+e_2X^{n-2}+\cdots+(-1)^ne_n$.
Since ${\rm char}(E)=p>n$, we have $k$ is invertible in $E$ for $1\leqslant k\leqslant n$. From the Newton identities $$ke_k=\Sigma_{i=1}^k(-1)^{i-1}e_{k-i}p_i \textrm{ for $1\leqslant k\leqslant n$ with $e_0=1$},$$ one can conclude that each $e_k$ is completely determined by $p_1, p_2,\cdots, p_k$, hence by ${\rm tr}(\rho_1(g^i)$ for $1\leqslant i\leqslant k$. Therefore, the characteristic polynomial of $\rho_1(g)$ and $\rho_2(g)$ are the same for all $g\in G$, the Brauer-Nesbitt theorem applies.
\end{proof}

Consider the case when $G=G_K$ is the absolute Galois group of a global field $K$. Since the trace function is a continuous class function, and Frobenius elements are dense in $G_K$ by Chebotarev density theorem, it is enough to test on trace of Frobenius ${\rm tr}(\rho_i({\rm Frob}_v))$ for all but finitely many places $v$ of $K$.

In 1998, Rajan studied strong multiplicity one for $\ell$-adic representations, and provided a finer result on representations having a positive density of places with equal Frobenius trace.  

\begin{thm}[\cite{Ra98}, Theorem 1] \label{Ra1}
Let $K$ be a global field , $G_K:=\Gal(\bar{K}/K)$ be the absolute Galois group, and $\Sigma_K$ be the set of non-archimedean places of $K$. Let $F$ be a non-archimedean local field of characteristic $0$ and residue characteristic $\fl$. Consider representations $\rho: G_K\rightarrow \GL_r(F)$ such that $\rho$ is continuous, unramified outside a finite set $S\subset\Sigma_K$, and semisimple.

 Suppose $\rho_1, \rho_2$ are such representations and the upper density of ${\rm SM}(\rho_1,\rho_2):=\{v\in\Sigma_K-S\mid {\rm tr}(\rho_1({\rm Frob}_v))={\rm tr}(\rho_2({\rm Frob}_v))\}$ in $\Sigma_K$ is strictly greater than $1-1/2r^2$, then $\rho_1\simeq\rho_2$.

\end{thm}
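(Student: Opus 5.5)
The plan is to follow Rajan's algebraic-group approach. First we form $\rho=(\rho_1,\rho_2):G_K\to \GL_r(F)\times\GL_r(F)$ and let $H$ be the Zariski closure of its image. Put $H^\circ$ for its identity component and $\Phi=H/H^\circ$, a finite group. Consider the Zariski closed subvariety $X=\{(g_1,g_2)\mid {\rm tr}(g_1)={\rm tr}(g_2)\}$, which is stable under conjugation.

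The key input is an algebraic Chebotarev density statement. Let $\rho_\Phi:G_K\to\Phi$ be the composite map, which is continuous with finite image. For each coset $\phi\in\Phi$, either $H^\phi:=\phi H^\circ\subset X$, or $X\cap H^\phi$ is a proper closed subvariety of $H^\phi$. In the second case, the Frobenii $v$ with $\rho({\rm Frob}_v)\in X\cap H^\phi$ have density zero. To prove this, one uses that the Haar measure of $\overline{\rho(G_K)}$ (an $\fl$-adic Lie group, open in $H(F)$ on each component up to finite index) assigns measure zero to proper subvarieties. Chebotarev, in its equidistribution form, then transfers this to Frobenius density. Consequently the upper density of ${\rm SM}(\rho_1,\rho_2)$ equals $|\{\phi: H^\phi\subset X\}|/|\Phi|$.

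Next comes the counting step. Suppose the density exceeds $1-1/2r^2$. Let $C=\{\phi\in\Phi: H^\phi\subset X\}$; then $|C|>(1-1/2r^2)|\Phi|$. We want to conclude that ${\rm tr}\rho_1={\rm tr}\rho_2$ on all of $G_K$, and then invoke Theorem \ref{0case} (characteristic $0$) to get $\rho_1\simeq\rho_2$. For this we apply Rajan's combinatorial lemma to the class function $f=\chi_1-\chi_2$ on $\Phi$-cosets. The function $f$ is a virtual character of $G_K$ of the form $\sum a_i\psi_i$ with $\sum a_i^2\le 2r^2$. Here we decompose $\rho_1\oplus\rho_2$ into irreducibles after restricting to $G_L$, with $L$ the fixed field of $\rho_\Phi^{-1}(1)$; this is Rajan's argument. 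If $f\neq 0$, orthogonality gives $1\le\langle f,f\rangle$. On the other hand, the support of $f$ has measure $<1/2r^2$, and $|f|\le 2r$ pointwise. Together these give $\langle f,f\rangle<(2r)^2/(2r^2)\cdot\tfrac12$, a contradiction, once we check $f$ vanishes identically on $H^\phi$ for $\phi\in C$.

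We expect the main obstacle to be making the orthogonality/Haar-measure estimate rigorous. The group is an $\fl$-adic compact group rather than finite, so Schur orthogonality must be applied to $|f|^2$ integrated against Haar measure on $\overline{\rho(G_K)}$. We also need the bound $\|f\|_2^2\ge 1$ when $f\ne0$, using that $\rho_1,\rho_2$ are semisimple, so $f$ is an integral combination of distinct irreducible characters. We would first try to reduce to the finite image $\rho_\Phi$ by working with the compact group directly: orthonormality of irreducible characters holds for continuous representations of compact groups over $\mathbb{C}$ after choosing an embedding $F\hookrightarrow\mathbb{C}$ on the relevant finitely generated field of coefficients. This is where most of the care is needed.
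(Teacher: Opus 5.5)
The paper does not prove this statement; it quotes it from Rajan. So the comparison is with Rajan's argument, part of which the paper recalls in Remark~\ref{rem1}.

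Your algebraic Chebotarev step is the right first move, but its justification is off: $\rho(G_K)$ need not be open in $H(F)$, since a compact $\fl$-adic group can be Zariski dense in a torus of larger dimension. What is true, and sufficient in characteristic $0$, combines two facts. Every open subgroup of $\rho(G_K)\cap H^\circ(F)$ is Zariski dense in $H^\circ$, and a nonzero $\fl$-adic analytic function on a ball has a Haar-null zero set. Hence $X\cap H^\phi$ meets $\rho(G_K)$ in a null set whenever $H^\phi\not\subset X$, and $\delta({\rm SM}(\rho_1,\rho_2))=|\Psi|/|\Phi|$ with $\Psi=\{\phi: H^\phi\subset X\}$.

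The genuine gap is in the counting step, in two respects.

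\emph{Wrong group.} You cannot run Schur orthogonality on $\rho(G_K)$ after an abstract embedding $F\hookrightarrow\mathbb{C}$. The resulting homomorphism $\rho(G_K)\to\GL_r(\mathbb{C})$ is not continuous, because a continuous homomorphism from a profinite group to $\GL_r(\mathbb{C})$ has finite image. So the orthogonality relations for Haar measure are unavailable. The bound $|f|\le 2r$ also fails, because the transported eigenvalues need not lie on the unit circle. Nor can you work on $\Phi$, since $f$ is not constant on cosets of $H^\circ$; decomposing over $G_L$ is beside the point.

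The missing idea is Rajan's unitary trick. Because $\rho_1\oplus\rho_2$ is semisimple in characteristic $0$, $H$ is reductive. Embed a field of definition into $\mathbb{C}$ and take a maximal compact subgroup $J\subset H(\mathbb{C})$; it is Zariski dense and $J/J^\circ\cong\Phi$. Let $p_1,p_2:H\to\GL_r$ be the two projections and $f={\rm tr}\,p_1-{\rm tr}\,p_2$. The condition $H^\phi\subset X$ is polynomial, so $f$ vanishes on $J^\phi$ for $\phi\in\Psi$. On $J$ you have honest orthogonality, and $|f|\le 2r$ by unitarity.

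\emph{Arithmetic.} The factor $\tfrac12$ in your estimate has no source. The correct bound is $\int_J|f|^2\,d\mu\le (2r)^2(1-|\Psi|/|\Phi|)<2$. With only $\langle f,f\rangle\ge 1$, you would need density $>1-1/4r^2$.

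The point you are missing is that $f$ has degree zero. Write $f=\sum a_i\psi_i$ with $\psi_i$ distinct irreducible characters of $J$; then $\sum a_i\dim\psi_i=0$. So $f\neq 0$ forces at least two nonzero $a_i$, giving $\langle f,f\rangle=\sum a_i^2\ge 2$. The resulting contradiction gives $f\equiv0$ on $J$, hence on $H$ by Zariski density, hence ${\rm tr}\,\rho_1={\rm tr}\,\rho_2$ on $G_K$. Theorem~\ref{0case} then gives $\rho_1\simeq\rho_2$.
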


As an application to his main result, the following corollary on representations associated to elliptic curves gives a finer result on Theorem \ref{0case}. It shows what will occur when two $\ell$-adic representations associated to non-CM elliptic curves having equal Frobenius traces for positive density of places.

\begin{cor}[\cite{Ra98}, Corollary 2]\label{Ra2}
Suppose $E_1, E_2$ are two elliptic curves defined over a number field $K$. Let $S$ be the finite set consisting of the ramified places of $E_1$ and $E_2$. For a rational prime $\fl$, let $\rho_i$ be the $\fl$-adic Galois representation associated to $E_i$ for $i=1,2$. For $v\in\Sigma_K-S$ with $(\fl,v)=1$, let $a_v(E_i)$ denote the Frobenius trace ${\rm tr}(\rho_i({\rm Frob}_v))$ associated to $\rho_i$.  

Suppose that the set $\{v\in\Sigma_K-S\mid a_v(E_1)=a_v(E_2)\}$ in $\Sigma_K$ has positive upper density. Then there exists a quadratic Dirichlet character $\chi$ of $G_K$ such that $E_2$ is isogenous to $E_1\otimes \chi$.

\end{cor}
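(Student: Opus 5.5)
The plan is to deduce the corollary from Rajan's Theorem \ref{Ra1} (in its Theorem 2 form, which handles positive upper density rather than density $>1-1/2r^2$), and then translate back to elliptic curves using Faltings' isogeny theorem. Write $G=G_K$ and $\rho=(\rho_1,\rho_2):G\to \GL_2(\mathbb{Q}_\fl)\times\GL_2(\mathbb{Q}_\fl)$. Let $H$ be the Zariski closure of $\rho(G)$, $H^\circ$ its identity component, and $\Phi=H/H^\circ$. Consider the closed subvariety $X=\{(g_1,g_2): {\rm tr}\,g_1={\rm tr}\,g_2\}$.

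\textbf{Step 1 (algebraic Chebotarev).} Following Rajan's argument, for each connected component $C$ of $H$ the Frobenius elements $\rho({\rm Frob}_v)$ lying in $C$ have density $1/|\Phi|$. If $C\not\subset X$, then $C\cap X$ is a proper closed subset of $C$, so its preimage is a closed subset of $G$ of Haar measure zero. Hence the density of $\{v: \rho({\rm Frob}_v)\in X\}$ equals $|\{C: C\subset X\}|/|\Phi|$. Positive upper density therefore forces some component $C=hH^\circ$ to lie in $X$.

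\textbf{Step 2 (identify $H^\circ$).} Since the $E_i$ are non-CM, Serre's open image theorem shows the image of each $\rho_i$ is open in $\GL_2$, so each projection of $H^\circ$ is $\GL_2$. Goursat's lemma applied to $H^\circ\subset \GL_2\times\GL_2$ then leaves two possibilities.

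The first is that $H^\circ=\GL_2\times\GL_2$. This is impossible: it would mean $E_1$ and $E_2$ are not geometrically isogenous, and a full-product coset $h(\GL_2\times\GL_2)$ cannot lie inside the proper hypersurface $X$.

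The second is that $H^\circ$ contains $\{(g,\sigma(g)\,\mu(g))\}$ for an automorphism $\sigma$ of $\GL_2$. Because the determinants of both $\rho_i$ equal the cyclotomic character, the determinants coincide, so $H^\circ\subset\{\det g_1=\det g_2\}$. Combining this with Goursat, and with the fact that all automorphisms of ${\rm SL}_2$ are inner, gives $H^\circ=\{(g,\,a g a^{-1})\}$ up to scalars $\pm1$. In other words, after restricting to the open subgroup $G_L=\rho^{-1}(H^\circ)$ we get $\rho_2|_{G_L}\cong\rho_1|_{G_L}$, and Faltings then makes $E_1$ and $E_2$ isogenous over $L$.

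\textbf{Step 3 (descend to a twist).} Since $\rho_1|_{G_L}$ is absolutely irreducible with the same restriction as $\rho_2$, the space $\mathrm{Hom}_{G_L}(\rho_1,\rho_2)$ is one-dimensional. The group $G_K$ acts on this line through a character $\chi$ of finite order, and this yields $\rho_2\cong\rho_1\otimes\chi$. Taking determinants gives $\chi^2=1$, so $\chi$ is quadratic or trivial. Finally, $\rho_1\otimes\chi$ is the Tate module of the quadratic twist $E_1\otimes\chi$, and Faltings gives that $E_2$ is isogenous to $E_1\otimes\chi$ over $K$.

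The main obstacle is Step 2. Pinning down $H^\circ$ exactly requires both the open image theorem and the determinant argument. In addition, the passage from ``some coset of $H^\circ$ lies in $X$'' to ``$H^\circ$ is the graph of conjugation'' has to rule out the full product. For that I would argue that the trace-equality locus on any coset of $\mathrm{SL}_2\times\mathrm{SL}_2$ is a proper subvariety.
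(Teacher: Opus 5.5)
The paper gives no proof of this statement. It is quoted from Rajan, where it follows from his Theorem~2 by the route sketched in Remark~\ref{rem1}:
\begin{itemize}
\item Serre's theorem makes $\overline{\rho_1(G_K)}^{\rm Zar}=\GL_2$, so $\rho_1$ has connected monodromy and is absolutely irreducible.
\item Algebraic Chebotarev turns positive upper density into some component of $H$ lying in $X$.
\item A unitary trick in a maximal compact subgroup of $H(\mathbb{C})$ upgrades this to $H^\circ\subset X$.
\item One-dimensionality of $\operatorname{Hom}_{G_L}(\rho_1,\rho_2)$ gives a finite-order $\chi$ with $\rho_2\cong\rho_1\otimes\chi$, as in the proof of Theorem~\ref{thmb}(ii). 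Determinants give $\chi^2=1$, and Faltings finishes.
\end{itemize}

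Your Steps 1 and 3 are exactly this. Your Step 2 is a genuinely different bridge, and it works: instead of the unitary trick you determine $H^\circ$ directly from Goursat and $\det\rho_1=\det\rho_2$.
\begin{itemize}
\item If $\{g:(g,1)\in H^\circ\}\supseteq\mathrm{SL}_2$, then $H^\circ=\{(g_1,g_2):\det g_1=\det g_2\}$. On any coset you can freeze the first coordinate and move the second through a coset of $\mathrm{SL}_2$. The trace difference is then not identically zero, so the density is $0$.
\item Otherwise both Goursat kernels are central. Then $[H^\circ,H^\circ]$ is connected, hence the graph of an automorphism of $\mathrm{SL}_2$ (the ``up to $\pm1$'' ambiguity you allow is excluded by connectedness). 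That automorphism is conjugation by some $a\in\GL_2(\mathbb{Q}_\fl)$. The central torus lies in $\{(\lambda,\mu):\lambda^2=\mu^2\}^\circ=\{\lambda=\mu\}$. So $H^\circ=\{(x,axa^{-1})\}$ exactly, and $H^\circ\subset X$ is automatic.
\end{itemize}

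The trade-off is as follows. Rajan's argument uses only connectedness and absolute irreducibility of $\rho_1$'s monodromy, works for every $r$, and uses determinants only at the end. Yours is tied to $\GL_2$ and uses the determinant equality essentially; without it, groups like $\{(x,\det(x)^k\,axa^{-1})\}$ would survive. In return it gives an explicit dichotomy (density $0$ or $H^\circ\subset X$). It also bypasses the archimedean compact-form step, which Remark~\ref{rem1} says has no positive-characteristic counterpart.

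Two corrections. First, you silently assume both curves are non-CM. The displayed statement omits this, but it cannot be dropped; the sentence introducing the corollary does speak of non-CM curves. Over $\mathbb{Q}$, $y^2=x^3-x$ and $y^2=x^3+1$ both have $a_p=0$ for every $p\equiv 11\pmod{12}$, a set of density $1/4$. Yet their CM fields $\mathbb{Q}(i)$ and $\mathbb{Q}(\sqrt{-3})$ differ, so neither is isogenous to a quadratic twist of the other. So state the hypothesis explicitly. If you want it only for $E_1$, a CM $E_2$ falls to your own method: $p_2(H^\circ)$ is then a torus, which forces $\mathrm{SL}_2\times 1\subset H^\circ$, so again no component lies in $X$.

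Second, your first Goursat case should be $H^\circ=\{\det g_1=\det g_2\}$, not $\GL_2\times\GL_2$. The determinant constraint you invoke in the second case already rules out the full product. The coset argument in your last sentence is the right way to exclude this case.
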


\subsection{Drinfeld modules and associated Galois representations}
Let $A=\F_q[T]$ be the polynomial ring over finite field with $q=p^e$ an odd prime power, $F=\F_q(T)$ be the fractional field of $A$, and  $K$ be a finite extension over $F$. Set $K\{\tau\}$ to be the twisted polynomial ring with usual addition rule, and the multiplication rule is defined to be $\tau\alpha=\alpha^q\tau \text{ for any } \alpha\in K$.  

We view $K$ as an $A$-field, which is a field equipped with a homomorphism $\gamma: A\rightarrow K$. The {\bf{$A$-characteristic}} of $K$ is defined to be the kernel of $\gamma$.

\begin{defi}
\begin{enumerate}
\item[(i)] Let $\phi$ be a Drinfeld $A$-module over $K$. The Drinfeld module $\phi$ gives $K^{{\rm{alg}}}$ an $A$-module structure, where $a\in A$ acts on $\alpha\in K^{{\rm{alg}}}$ via $a\cdot\alpha=\phi_{a}(\alpha)$. We use the notation $^{\phi}K^{{\rm{alg}}}$ to emphasize the action of $A$ on $K^{{\rm{alg}}}$.

\item[(ii)] Let $\mathfrak{a}$ be an ideal of $A$ with monic generator $a$.The {{$\mathfrak{a}$-torsion}} of $\phi$ is defined to be the set $$\phi[\mathfrak{a}]=\phi[a]:=\{ \alpha\in K^{\rm alg}\mid \phi_a(\alpha)=0 \}\subseteq K^{\rm alg}.$$ The action $b\cdot \alpha=\phi_b(\alpha) \ \forall\ b\in A, \forall \alpha\in\phi[\mathfrak{a}]$ also gives $\phi[\mathfrak{a}]$ an $A$-module structure. 
\end{enumerate}
\end{defi}

\begin{prop}\label{prop0.2}
Let $\phi$ be a rank $r$ Drinfeld module over $K$ and $\mathfrak{a}$ a non-zero ideal of $A$,
 If $\phi$ has $A$-characteristic prime to $\mathfrak{a}$, then the $A/\mathfrak{a}$-module $\phi[\mathfrak{a}]$ is free of rank $r$
\end{prop}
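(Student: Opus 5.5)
The statement to prove is Proposition \ref{prop0.2}: for $\mathfrak{a}$ prime to the $A$-characteristic, $\phi[\mathfrak{a}]$ is a free $A/\mathfrak{a}$-module of rank $r$. The argument has two parts. First we treat a prime power $\mathfrak{a}=\fp^m$. Then we glue the prime powers together with the Chinese remainder theorem.

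\emph{Step 1: counting points.} Let $a$ be the monic generator of $\mathfrak{a}$. The polynomial $\phi_a(x)=\gamma(a)x+\sum_{i\ge1} c_i x^{q^i}$ is additive of degree $q^{r\deg a}$. Its derivative is the constant $\gamma(a)$. Since $\mathfrak{a}$ is prime to $\ker\gamma$, we have $\gamma(a)\neq 0$. So $\phi_a$ is separable, and $\#\phi[\mathfrak{a}]=q^{r\deg a}=\#(A/\mathfrak{a})^r$.

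\emph{Step 2: the prime case.} Take $\mathfrak{a}=\fp$ prime. Then $\phi[\fp]$ is a vector space over the field $A/\fp$. Its cardinality is $q^{r\deg\fp}=\#(A/\fp)^r$, so its dimension is $r$.

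\emph{Step 3: prime powers.} Take $\mathfrak{a}=\fp^m$ and let $\pi$ be the monic generator of $\fp$. Then $\phi[\fp^m]$ is a finite module over the local principal ideal ring $A/\fp^m$. By the structure theorem it is isomorphic to $\bigoplus_j A/\fp^{e_j}$ with $1\le e_j\le m$. Its $\pi$-torsion is exactly $\phi[\fp]$, and each summand contributes a one-dimensional $\pi$-torsion. Hence by Step 2 there are exactly $r$ summands. Counting via Step 1 gives $q^{\deg\fp\sum e_j}=q^{rm\deg\fp}$, so $\sum e_j=rm$. Together with $e_j\le m$ this forces every $e_j=m$, and the module is free of rank $r$.

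\emph{Step 4: general $\mathfrak{a}$.} Write $\mathfrak{a}=\prod\fp_i^{m_i}$ with the $\fp_i$ distinct. Then $\phi[\mathfrak{a}]=\bigoplus_i\phi[\fp_i^{m_i}]$ as modules over $A/\mathfrak{a}\cong\prod_i A/\fp_i^{m_i}$. The inclusion $\supseteq$ is clear. For the reverse, choose idempotent-type elements $b_i\in A$ with $\sum b_i=1$ and $b_i\in\prod_{j\ne i}\fp_j^{m_j}$. Then $x=\sum_i\phi_{b_i}(x)$, and each $\phi_{b_i}(x)$ is killed by $\fp_i^{m_i}$. A product of free rank-$r$ modules over the factors is free of rank $r$ over the product ring, which finishes the proof.

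The main obstacle is Step 3, passing from the cardinality count to freeness. It depends on knowing the rank of the $\fp$-torsion exactly, which is why the separability computation in Step 1 must also be applied to $\mathfrak{a}=\fp$ itself in Step 2.
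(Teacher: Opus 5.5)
Your proof is correct and follows essentially the standard argument of the reference the paper cites (Goss, Proposition 4.5.7); the paper itself gives no proof beyond that citation. The argument is: separability of $\phi_a$ gives $\#\phi[\mathfrak{a}]=\#(A/\mathfrak{a})^r$; the Chinese remainder theorem reduces to $\mathfrak{a}=\fp^m$; and the structure theorem together with $\dim_{A/\fp}\phi[\fp]=r$ forces every elementary divisor to be $\fp^m$. The only step you leave implicit is that the sum in Step 4 is direct, which is immediate because the summands have pairwise coprime annihilators (alternatively, compare cardinalities using Step 1).
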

\begin{proof}
See \cite{G96} Proposition 4.5.7.
\end{proof}

Let $\phi$ be a rank $r$ Drinfeld module over $K$ of generic characteristic, then $\phi_a(x)$ is separable, so we have $\phi[a]\subseteq K^{{\rm{sep}}}$.
This implies that $\phi[a]$ has a $G_K$-module structure. Given a nonzero prime ideal $\fl$ of $A$ with prime power $\fl^i$, we can consider the $G_K$-module $\phi[\mathfrak{l}^i]$. We obtain the residual representation 
$$\bar{\rho}_{\phi,\mathfrak{l}^i}:G_K\longrightarrow {\rm{Aut}}(\phi[\mathfrak{l}^i])\cong GL_r(A/{\mathfrak{l}}^i).$$
In the case $i=1$, we get the so-called \textbf{mod $\mathfrak{l}$ Galois representation}
$$\bar{\rho}_{\phi,\mathfrak{l}}:G_K\longrightarrow {\rm{Aut}}(\phi[\mathfrak{l}])\cong GL_r(A/\fl).$$
Taking inverse limit with respect to $\mathfrak{l}^i$, we have the \textbf{$\mathfrak{l}$-adic Galois representation}

\section{Frobenius Traces and Isomorphism of Galois Representations}
As a alternative approach to Proposition \ref{pcase}, we observe that if one restrict the representations raised from rank-$2$ Drinfeld modules, one can still rely on Frobenius traces to distinguish Drinfeld modules up to isogeny classes. This method avoids the assumption ${\rm char}(E)>n$, while it requires a closed formula between coefficients of Drinfeld modules and coefficients of Frobenius characteristic polynomials, which in rank $r\geqslant 3$ is not clear.

\begin{lem}\label{existh}

There is a finite Galois extension $L/K$, independent of choice of $\fl$, such that $\rho_{1,\fl}\cong\rho_{2,\fl}$ when restricting to the open normal subgroup $H:=\Gal(\bar{K}/L)$ of $G_K$.

\end{lem}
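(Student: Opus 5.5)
The plan is to reduce the trace condition to an equality of full Frobenius characteristic polynomials over a suitable finite extension, and then apply the Brauer--Nesbitt theorem (Theorem \ref{BN}). For a rank-$2$ representation the characteristic polynomial of ${\rm Frob}_v$ is $X^2-{\rm tr}(\rho_{i,\fl}({\rm Frob}_v))X+\det(\rho_{i,\fl}({\rm Frob}_v))$. Since the traces agree by hypothesis, the only missing information is the determinant. This is exactly where Newton identities fail when $p=2$, so we bypass them with the Weil pairing.

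\textbf{Step 1: identify the determinant.} Write $\phi_{i,T}=T+g_i\tau+\Delta_i\tau^2$ with $\Delta_i\in K^*$. By the Weil pairing for rank-$2$ Drinfeld modules (Anderson's exterior square, cf. van der Heiden), $\det\rho_{i,\fl}$ is the $\fl$-adic representation attached to the rank-$1$ Drinfeld module $\psi^{(i)}$ given by $\psi^{(i)}_T=T-\Delta_i\tau$. This is a twist of the Carlitz module.

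\textbf{Step 2: choose the extension $L$.} The two rank-$1$ modules $\psi^{(1)}$ and $\psi^{(2)}$ are isomorphic over any field containing an element $c$ with $c^{q-1}=\Delta_1/\Delta_2$. Indeed, conjugating by $c$ rescales the $\tau$-coefficient by $c^{q-1}$ (or its inverse, depending on the direction of the conjugation). So set
$$L:=K\bigl((\Delta_1/\Delta_2)^{1/(q-1)}\bigr).$$
Since $\mu_{q-1}=\F_q^*\subset K$, this is a Kummer extension, hence finite Galois. It depends only on $\phi_1,\phi_2$ and not on $\fl$. Over $L$ the modules $\psi^{(1)},\psi^{(2)}$ are isomorphic, so their $\fl$-adic characters coincide on $G_L$ for every $\fl$. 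Hence $\det\rho_{1,\fl}|_{G_L}=\det\rho_{2,\fl}|_{G_L}$.

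\textbf{Step 3: pass to all of $G_L$.} For all but finitely many places $w$ of $L$, lying over places $v$ of $K$ where the traces agree, ${\rm Frob}_w$ is a power ${\rm Frob}_v^{f}$. I need the traces of $\rho_{i,\fl}({\rm Frob}_v^f)$ to agree. Since determinants need not agree at $v$, I will not use the recursion for ${\rm tr}(g^f)$. Instead I will apply the hypothesis only to places $v$ that split completely in $L$, which have positive density, and argue as follows. The set of $\sigma\in G_L$ where the two characteristic polynomials agree is closed. By Chebotarev, the Frobenius elements of places of $L$ of degree $1$ over $K$ (outside a finite set) are dense in $G_L$. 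Continuity of trace and determinant then gives equal characteristic polynomials for all $\sigma\in G_L$.

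\textbf{Step 4: apply Brauer--Nesbitt.} For this I need $\rho_{i,\fl}|_{G_L}$ to be semisimple. Here I will invoke that $\phi_i$ is non-CM of generic characteristic. By Pink's open image theorem (or Taguchi's semisimplicity result), the image of any open subgroup of $G_K$ is Zariski dense in $\GL_2$. Hence $\rho_{i,\fl}|_{G_L}$ is absolutely irreducible, and Theorem \ref{BN} gives $\rho_{1,\fl}|_{H}\cong\rho_{2,\fl}|_{H}$ with $H=\Gal(\bar K/L)$.

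\textbf{Main obstacle.} I expect the crux to be Step 1: making precise, with correct sign and normalization, that $\det\rho_{i,\fl}$ is the Tate module of $T-\Delta_i\tau$ uniformly in $\fl$. That uniformity is what guarantees that $L$ is independent of $\fl$. I would first try to cite the Weil pairing directly. If that fails, I would compute $\det$ on $\phi_i[\fl^n]$ via the Moore determinant, which produces the torsion of the rank-$1$ module explicitly.
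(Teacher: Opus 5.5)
Your proposal is correct and follows essentially the same route as the paper: the Weil pairing identifies $\det\rho_{i,\fl}$ with the $\fl$-adic character of $T-\Delta_i\tau$, a Kummer extension independent of $\fl$ makes these characters agree on $G_L$, and Brauer--Nesbitt (Theorem \ref{BN}) finishes. The differences are minor: your $L=K\bigl((\Delta_1/\Delta_2)^{1/(q-1)}\bigr)$ is contained in the paper's $K(\sqrt[q-1]{g_2},\sqrt[q-1]{s_2})$; your Step 3 detour through degree-one places is unnecessary, since Chebotarev density and continuity of the trace already give trace equality on all of $G_K$, hence on $G_L$; and your Step 4 supplies the semisimplicity of the restrictions to $H$, which the paper uses without comment.
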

\begin{proof}
Let us set $\phi_{1,T}=T+g_1\tau+g_2\tau^2$, and $\phi_{2,T}=T+s_1\tau+s_2\tau^2$, one can choose $L$ to be the compositum of Kummer extensions $L=K(\sqrt[q-1]{g_2},\sqrt[q-1]{s_2})$.
As we know that ${\rm tr}(\rho_{1, \fl}({\rm Frob}_v))={\rm tr}(\rho_{2, \fl}({\rm Frob}_v))\textrm{ for all but finitely many places v}$, Chebotarev density theorem then implies ${\rm tr}(\rho_{1,\fl}(g))={\rm tr}(\rho_{2,\fl}(g))$ for any $g\in G_K$, hence $${\rm tr}(\rho_{1,\fl}(h))={\rm tr}(\rho_{2,\fl}(h)) \textrm{ for any $h\in H$}.$$
 On the other hand, from the Weil paring for Drinfeld modules, we have that $$\det\circ\rho_{i,\fl}=\rho_{\psi_i,\fl} \textrm{ for $i=1,2$},$$ where $\psi_{1,T}=T-g_2\tau$, and $\psi_{2,T}=T-s_2\tau$ are twisted Carlitz module defined over $K$ together with their associated $\fl$-adic Galois representations $\rho_{\psi_i,\fl}:H\rightarrow \GL_2(F_\fl)$. Consider $\psi_1$ and $\psi_2$ over $L$, both of them are isomorphic to the twisted Carlitz module $\psi_T=T-\tau$. Therefore, one can deduce that $\rho_{\psi_1,\fl}\cong\rho_{\psi_2,\fl}$, which implies that $$\det\circ(\rho_{1,\fl}(h))=\det\circ(\rho_{2,\fl}(h)) \textrm{ for any $h\in H$}.$$ 
Thus Theorem \ref{BN} implies $\rho_1\cong\rho_2$ when restricting to the subgroup $H=\Gal(\bar{K}/L)$.

\end{proof}

\begin{lem}\label{1coho}

Let $G$ be a profinite group, and $H\leqslant G$ is a open normal subgroup with finite quotient $G/H$. Let $\rho_1, \rho_2: G\rightarrow \GL(V)\cong \GL_n(E)$ are surjective representations over a field $E$ that satisfy the properties below:
\begin{enumerate}
\item $\rho_i$ restricted to $H$ is irreducible for $i=1,2$.
\item There exists an $H$-equivariant isomorphism $\Phi:V\rightarrow V$ such that $\Phi\circ\rho_1(h)=\rho_2(h)\circ\Phi\textrm{ for all $h\in H$}$. 
\item $\rho_1(G)$ is open in $\mathrm{GL}_n(E)$.
\end{enumerate}
Then either $\rho_1\cong\rho_2$ or there exists a character $\chi:G\rightarrow E^*$ such that $\rho_1\cong\rho_2\otimes \chi$.

\end{lem}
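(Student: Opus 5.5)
The plan is the standard Schur's lemma and Clifford-theory argument. For each $g\in G$, consider the map
$$\Phi_g:=\rho_2(g)^{-1}\circ\Phi\circ\rho_1(g):V\rightarrow V.$$
Because $H$ is normal, $g^{-1}hg\in H$ for every $h\in H$. Hence
$$\Phi_g\circ\rho_1(h)=\rho_2(g)^{-1}\Phi\rho_1(ghg^{-1})\rho_1(g)=\rho_2(g)^{-1}\rho_2(ghg^{-1})\Phi\rho_1(g)=\rho_2(h)\circ\Phi_g,$$
so $\Phi_g$ is again an $H$-equivariant isomorphism from $\rho_1|_H$ to $\rho_2|_H$.

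Next compare $\Phi^{-1}\Phi_g$ with the scalars. This map is an $H$-endomorphism of the irreducible representation $\rho_1|_H$ (hypothesis 1). To conclude it is a scalar I want $\mathrm{End}_H(V)=E$. Schur's lemma only gives a division algebra, so here I use hypothesis 3. Since $\rho_1(H)$ has finite index in the open subgroup $\rho_1(G)$, it is open in $\GL_n(E)$. An element commuting with an open subgroup of $\GL_n(E)$ commutes with its $E$-span. That span is all of $M_n(E)$: an open subgroup contains $1+\varpi^kM_n(\cO)$, whose span is $M_n(E)$ (if $E$ is a topological field such as a local field). Therefore $\mathrm{End}_H(V)=E$, and $\Phi_g=\chi(g)^{-1}\Phi$ for a unique $\chi(g)\in E^*$. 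Equivalently,
$$\Phi\circ\rho_1(g)=\chi(g)^{-1}\rho_2(g)\circ\Phi .$$

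Then check that $\chi$ is a character. From the displayed relation, $\chi(g_1g_2)^{-1}\rho_2(g_1g_2)\Phi=\Phi\rho_1(g_1)\rho_1(g_2)=\chi(g_1)^{-1}\chi(g_2)^{-1}\rho_2(g_1g_2)\Phi$, so $\chi$ is multiplicative. It is trivial on $H$, so it factors through the finite group $G/H$ and is continuous. The relation says exactly that $\Phi$ intertwines $\rho_1$ with $\rho_2\otimes\chi^{-1}$. Thus $\rho_1\cong\rho_2\otimes\chi'$ with $\chi'=\chi^{-1}$, and the case $\chi'$ trivial gives $\rho_1\cong\rho_2$.

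The main obstacle is the identification $\mathrm{End}_H(V)=E$. Irreducibility over $E$ alone does not give absolute irreducibility, which is why hypothesis 3 (openness of the image) is needed; I will make the density/spanning argument explicit there. Surjectivity in the statement, if taken literally, would give this immediately, since the centralizer of $\GL_n(E)$ consists of scalars, but $\rho_1(H)$ is what matters.
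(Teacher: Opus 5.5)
Your proposal is correct and is essentially the paper's argument. Your $\Phi^{-1}\Phi_g=\Phi^{-1}\rho_2(g)^{-1}\Phi\rho_1(g)$ is conjugate to the paper's cocycle $\Psi(g)=\Phi\rho_1(g)\Phi^{-1}\rho_2(g)^{-1}$, and both proofs use openness of the image to show the $H$-centralizer is $E$, so this scalar is a character trivial on $H$ that gives the twist. The one difference is how the centralizer is computed: the paper notes that the finite-index subgroup $\rho_1(H)$ of the Zariski-dense $\rho_1(G)$ is still Zariski dense in the connected group $\GL_n$, whereas your spanning argument needs $\rho_1(H)$ to be open. That step silently uses continuity of $\rho_1$ ($\rho_1(H)$ is compact, hence a closed finite-index subgroup of $\rho_1(G)$, hence open), and you should state it explicitly.
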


\begin{proof}
Suppose that $\rho_1$ and $\rho_2$ are not isomorphic. Define $$\Psi:G\rightarrow {\rm Aut}_H(V)\textrm{ via } g\mapsto \Phi\rho_1(g)\Phi^{-1}\rho_2(g)^{-1}.$$

One then check that $\Psi(g_1g_2)=\Psi(g_1)\left( \rho_2(g_1)\Psi(g_2)\rho_2(g_1^{-1}) \right)$, so $\Psi(g)$ forms a $1$-cocycle, and this defines a nontrivial class in $H^1(G/H,{\rm Aut}_H(V))$. 

Now we consider the centralizer of the image  $\rho_1(H)$ in ${\rm End}_H(V)$, i.e. the set $$Z_{{\rm End}_E(V)}(\rho_1(H)):=\{f\in {\rm End}_E(V)\mid f\rho_1(h)=\rho_1(h)f \textrm{ for all }h\in H\}.$$

Since $\rho_1(G)$ is open in $\mathrm{GL}_n(E)$, it is Zariski dense in $\mathrm{GL}_n$. Since $H$ has finite index in $G$, the subgroup $\rho_1(H)$ has finite index in $\rho_1(G)$, and hence is also Zariski dense in $\mathrm{GL}_n$. Therefore
\[
Z_{\operatorname{End}_E(V)}(\rho_1(H))
=
Z_{\operatorname{End}_E(V)}(\mathrm{GL}_n)
=
E.
\]

 We can conclude that ${\rm Aut}_H(V)=E^*$, so any one class in $H^1(G/H,{\rm Aut}_H(V))$ corresponds to a character $\chi:G\rightarrow E^*$ that factors through the finite quotient $G/H$. From the equality $\Psi(g)=\Phi\rho_1(g)\Phi^{-1}\rho_2(g)^{-1}$ we get $$\Phi\rho_1(g)\Phi^{-1}=\Psi(g)\rho_2(g)=\chi(g)\rho_2(g) \textrm{ for some character $\chi$}.$$
 Thus we have $\rho_1\cong\rho_2\otimes\chi$.
 
\end{proof}

\begin{thm}\label{mainthm1}
Let $q=p^e$ be a prime power, $A:=\F_q[T]$ with field of fraction $F:=\F_q(T)$. Let $\phi_1$, $\phi_2$ be two non-CM rank-$2$ Drinfeld modules over $K$, a finite extension of $F$, of $A$-characteristic $0$. Let $\fl$ be a prime ideal of $A$ and $\rho_{i,\fl}:\Gal(\bar{K}/K)\rightarrow \GL_2(F_\fl)$  be the $\fl$-adic Galois representations associated to $\phi_i$ for $i=1,2$. 

Suppose that the Frobenius traces ${\rm{tr}}(\rho_{i,\fl}({\rm Frob}_{v}))$ are equal for all but finitely many places $v$ of $K$, then $\phi_1$ and $\phi_2$ are $K$-isogenous Drinfeld modules. Consequently, $\rho_{1,\fl}\cong\rho_{2,\fl}$.
\end{thm}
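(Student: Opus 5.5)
Combining Lemma \ref{existh} and Lemma \ref{1coho}, we will first show that $\rho_{1,\fl}\cong\rho_{2,\fl}\otimes\chi$ for a character $\chi$ of finite order, and then show that $\chi$ is trivial by comparing Frobenius traces. The isogeny statement will then follow from the Tate conjecture for Drinfeld modules (Taguchi, Tamagawa).

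By Lemma \ref{existh} there is a finite Galois $L/K$ with $H=\Gal(\bar K/L)$ such that $\rho_{1,\fl}|_H\cong\rho_{2,\fl}|_H$. This supplies the $H$-equivariant isomorphism $\Phi$ required in hypothesis 2 of Lemma \ref{1coho}.

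To check hypotheses 1 and 3, we use that $\phi_1,\phi_2$ are non-CM of generic characteristic. Pink's open image theorem (rank $2$, $\mathrm{End}_{\bar K}=A$) gives that $\rho_{i,\fl}(G_K)$ is open in $\GL_2(F_\fl)$, and the same holds for $\rho_{i,\fl}(H)$, since $H$ has finite index. An open subgroup of $\GL_2(F_\fl)$ is Zariski dense, so $\rho_{i,\fl}|_H$ is absolutely irreducible. The surjectivity requirement in Lemma \ref{1coho} is really only used through Zariski density, so we read it as ``onto a Zariski dense image''. Lemma \ref{1coho} then yields either $\rho_{1,\fl}\cong\rho_{2,\fl}$ or $\rho_{1,\fl}\cong\rho_{2,\fl}\otimes\chi$ for some $\chi:G_K\to E^*$ factoring through $G_K/H$.

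Suppose we are in the second case. Then for every $g\in G_K$,
\[
\mathrm{tr}\,\rho_{2,\fl}(g)=\mathrm{tr}\,\rho_{1,\fl}(g)=\chi(g)\,\mathrm{tr}\,\rho_{2,\fl}(g),
\]
using Chebotarev density and continuity as in the proof of Lemma \ref{existh}. Hence $\chi(g)=1$ whenever $\mathrm{tr}\,\rho_{2,\fl}(g)\neq0$. The set $U=\{g:\mathrm{tr}\,\rho_{2,\fl}(g)\neq0\}$ meets every coset $gH$. Indeed, $\rho_{2,\fl}(gH)$ is a translate of an open subgroup of $\GL_2(F_\fl)$, and on such a translate the trace is not identically zero. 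Since $\chi$ is constant on cosets of $H$, it follows that $\chi\equiv1$, so $\rho_{1,\fl}\cong\rho_{2,\fl}$. The only delicate point is that a trace can vanish on a single element; passing to cosets of the open subgroup, where the trace takes many values, avoids this.

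Finally, the isomorphism $\rho_{1,\fl}\cong\rho_{2,\fl}$ gives a nonzero element of
\[
\mathrm{Hom}_{A_\fl[G_K]}(T_\fl\phi_1,T_\fl\phi_2)\otimes F_\fl ,
\]
and some multiple of it lies in the integral Hom. By the Tate conjecture for Drinfeld modules, this Hom equals $\mathrm{Hom}_K(\phi_1,\phi_2)\otimes_A A_\fl$. So $\mathrm{Hom}_K(\phi_1,\phi_2)\neq0$, and any nonzero morphism between Drinfeld modules of the same rank is an isogeny. Thus $\phi_1$ and $\phi_2$ are $K$-isogenous.

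The main obstacle is the legitimacy of applying Lemma \ref{1coho}, which asks for surjectivity while Pink's theorem only provides openness. The fix is to note that the proof of Lemma \ref{1coho} uses only the Zariski density of $\rho_1(H)$, which openness supplies. A secondary check is that Pink's theorem applies for an arbitrary prime $\fl$, including the case where $\fl$ divides data of the base field.
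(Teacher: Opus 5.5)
Your proposal is correct and follows the paper's route: Lemma~\ref{existh}, then Lemma~\ref{1coho} (you rightly note that only the Zariski density of $\rho_1(H)$ is used there), then a trace comparison to kill the twist, then the Tate isogeny theorem. The differences are refinements: you stay at the given $\fl$ using Pink's $\fl$-adic openness, whereas the paper passes to an auxiliary $\tilde{\fl}$ with (claimed) full image, which tacitly needs $\fl$-independence of Frobenius traces; and your coset argument (the trace cannot vanish identically on $\rho_{2,\fl}(g)\rho_{2,\fl}(H)$) makes rigorous the paper's step, which as written only yields vanishing traces on the positive-density set where $\chi(\mathrm{Frob}_v)\neq 1$, not at almost all $v$.
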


\begin{proof}
For the given two non-CM Drinfeld modules $\phi_1$ and $\phi_2$, from the Open Image Theorem and Lemma \ref{existh}, one can choose a prime $\tilde{\fl}\in A$ of large enough degree such that the following conditions are satisfied:
\begin{enumerate}
\item $\rho_{i,\tilde{\fl}}|_H:H\rightarrow \GL_2(F_{\tilde{\fl}})$ is irreducible for $i=1,2$
\item ${\rm Im}\rho_{i,\tilde{\fl}}\cong \GL_2(F_{\tilde{\fl}})$ for $i=1,2$
\item $\rho_{1,\tilde{\fl}}\cong\rho_{2,\tilde{\fl}}$ when restricted on $H=\Gal(\bar{K}/L)$
\end{enumerate}

Lemma \ref{1coho} then implies either $\rho_{1,\tilde{\fl}}\cong\rho_{2,\tilde{\fl}}$ or there is a nontrivial Galois character $\chi:G\rightarrow F_{\tilde{\fl}}^*$ such that $\rho_{1,\tilde{\fl}}\cong\rho_{2,\tilde{\fl}}\otimes\chi$. 

On the other hand, From the assumption that for all but finitely many places $v$ of $K$, we have ${\rm tr}(\rho_{1, \tilde{\fl}}({\rm Frob}_v))={\rm tr}(\rho_{2, \tilde{\fl}}({\rm Frob}_v))$. This forces $\rho_{1,\tilde{\fl}}\cong\rho_{2,\tilde{\fl}}$, otherwise there exists a nontrivial Galois character $\chi$ such that ${\rm tr}(\rho_{1, \tilde{\fl}}({\rm Frob}_v))={\rm tr}(\rho_{2, \tilde{\fl}}({\rm Frob}_v))\cdot \chi({\rm Frob}_v)$ for all but finitely many places $v$ of $K$, which forces ${\rm tr}(\rho_{1, \tilde{\fl}}({\rm Frob}_v))={\rm tr}(\rho_{2, \tilde{\fl}}({\rm Frob}_v))=0$ for all but finitely many places $v$, a contradiction.

Once we obtain an isomorphism $\rho_{1,\tilde{\fl}}\cong\rho_{2,\tilde{\fl}}$, we have immediately that there is a $G_K$-equivariant isomorphism between the Tate modules of $\phi_1$ and $\phi_2$, i.e. we have $V_{\tilde{\fl}}(\phi_1)\simeq V_{\tilde{\fl}}(\phi_2)$ where $V_{\tilde{\fl}}(\phi_i)=T_{\tilde{\fl}}(\phi_i)\otimes F_{\tilde{\fl}}$. Now Tate's isogeny theorem for Drinfeld modules gives
\[
\operatorname{Hom}_K(\phi_1,\phi_2)\otimes_A F_{\tilde{\fl}}
\simeq
\operatorname{Hom}_{G_K}(V_{\tilde{\fl}}(\phi_1),V_{\tilde{\fl}}(\phi_2)).
\]
The right-hand side is nonzero, since it contains an isomorphism. Hence $\operatorname{Hom}_K(\phi_1,\phi_2)\neq 0.$ Since \(\phi_1\) and \(\phi_2\) have the same rank, any nonzero homomorphism between them is an isogeny. Thus \(\phi_1\) and \(\phi_2\) are \(K\)-isogenous and so $\rho_1\cong\rho_2$.

\end{proof}

The key strategy in Theorem \ref{mainthm1} is that one can take a suitable finite extension $L/K$ to make Frobenius determinant to be equal, then the fact that Frobenius characteristic polynomial only involves in trace and determinant forces Frobenius characteristic polynomial of two representations are the same when restrict to $G_L$. Hence this technique would not work for higher dimensional Galois representations even for representations associated to rank-$r$ Drinfeld modules with $r\geqslant 3$. In order to generalize Theorem \ref{mainthm1} to arbitrary $n$-dimensional semisimple $\fl$-adic Galois representations over a local field $E$ of positive characteristic, we further requires a condition on irreducibility of two representations.

\begin{prop}\label{Main1}
Let $K$ be a global function field, $G_K:=\Gal(\bar{K}/K)$, and $E$ be any local field of characteristic $p>0$. Let $n\in \mathbb{N}$, and $\rho_1, \rho_2:G_K\rightarrow \GL_n(E)$ be continuous representations, unramified outside a finite set $S$ of places of $K$. Suppose further that
\begin{itemize}
\item For any $ \fp\not\in S$, we have ${\rm Tr}(\rho_1({\rm Frob}_\fp))={\rm Tr}(\rho_2({\rm Frob}_\fp))$.
\item $\rho_1$ is absolutely irreducible and $\rho_2$ is irreducible.
\end{itemize}
Then $\rho_1\cong\rho_2$
\end{prop}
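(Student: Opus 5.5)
Via Chebotarev density and continuity of the trace, the hypothesis extends from Frobenius elements to all of $G_K$. Then, instead of Newton identities, I would use the algebra-level trace argument (Frobenius--Schur / Wedderburn): for an absolutely irreducible representation, the trace form on the group algebra is nondegenerate modulo the kernel.

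\textbf{Step 1.} Since $\rho_1,\rho_2$ are unramified outside $S$, they factor through $G_{K,S}$. Frobenius conjugacy classes $\{{\rm Frob}_\fp\}_{\fp\notin S}$ are dense in $G_{K,S}$, and $g\mapsto {\rm Tr}\rho_1(g)-{\rm Tr}\rho_2(g)$ is a continuous class function. Hence ${\rm Tr}\rho_1(g)={\rm Tr}\rho_2(g)$ for all $g\in G_K$. By linearity, ${\rm Tr}\rho_1(x)={\rm Tr}\rho_2(x)$ for all $x$ in the group algebra $E[G_K]$, with $\rho_i$ extended linearly.

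\textbf{Step 2.} Put $\rho=\rho_1\oplus\rho_2:E[G_K]\to M_n(E)\times M_n(E)$, and let $B$ be its image, a finite-dimensional $E$-algebra. Let $\pi_1,\pi_2:B\to M_n(E)$ be the two projections. Because $\rho_1$ is absolutely irreducible, Burnside's theorem gives $\pi_1(B)=M_n(E)$. The key claim is that $\ker\pi_2\subseteq\ker\pi_1$. Take $b\in\ker\pi_2$. For every $c\in B$ we have $bc\in\ker\pi_2$, since $\ker\pi_2$ is an ideal. Traces agree on $B$, so
\[
{\rm Tr}(\pi_1(b)\pi_1(c))={\rm Tr}(\pi_2(bc))=0 .
\]
As $\pi_1(c)$ ranges over all of $M_n(E)$, nondegeneracy of the trace pairing on $M_n(E)$ forces $\pi_1(b)=0$. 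This step uses only the trace pairing and involves no division by integers, so it works in characteristic $p$.

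\textbf{Step 3.} By Step 2 there is a surjective algebra map $\sigma:\pi_2(B)\to M_n(E)$ with $\sigma\circ\pi_2=\pi_1$. The module $V_2=E^n$ is a faithful $\pi_2(B)$-module, and it is simple because $\rho_2$ is irreducible. So $\pi_2(B)$ is a primitive artinian algebra. By Wedderburn it is simple, $\pi_2(B)\cong M_m(D)$ for a division algebra $D$. Since $M_m(D)$ is simple and $\sigma$ is surjective onto a nonzero algebra, $\sigma$ is an isomorphism. Then $V_2$ is the unique simple module of $\pi_2(B)\cong M_n(E)$, which is $E^n$ itself, so $\dim V_2=n$ is consistent. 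Transporting $V_2$ along $\sigma$ gives a $B$-isomorphism $V_2\cong V_1$, where $V_1=E^n$ carries $\rho_1$. This isomorphism is $G_K$-equivariant, so $\rho_1\cong\rho_2$.

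The main obstacle is Step 2. One must see that irreducibility of $\rho_2$ alone would not suffice (as the paper notes, trace equality fails in general in characteristic $p$), and that absolute irreducibility of $\rho_1$ is exactly what makes the trace pairing nondegenerate on the image of $\rho_1$. Irreducibility of $\rho_2$ is then needed in Step 3 to rule out $\rho_2$ being a non-split extension or a multiple copy. For example, with $n$ divisible by $p$, the direct sum of $p$ copies of a representation has zero trace, and this is the kind of pathology Step 3 excludes.
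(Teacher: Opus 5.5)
Your proof is correct, but at the decisive step it takes a different route from the paper's.

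Both proofs start with Chebotarev and Burnside. The paper then defines the comparison map in the opposite direction, $\Phi:\mathrm{End}_E(V_1)\to E[\rho_2(G_K)]$, $\rho_1(x)\mapsto\rho_2(x)$. It therefore needs $\ker\rho_1\subseteq\ker\rho_2$, which it justifies by asserting that the trace pairing on the semisimple algebra $A_2=E[\rho_2(G_K)]$ is nondegenerate. It then builds an explicit intertwiner $\Pi(S)=\sum_i\rho_2(g_i)Sa_i^*$ from a trace-dual basis, shows $\Pi\neq 0$ via Skolem--Noether and the identity $\sum_i a_iXa_i^*=\mathrm{Tr}(X)\,\mathrm{Id}$, and finishes with Schur's lemma.

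You instead prove $\ker\rho_2\subseteq\ker\rho_1$, using only nondegeneracy of the trace form on $M_n(E)$, which holds in every characteristic. You then use irreducibility of $\rho_2$ (a faithful simple module makes $A_2$ simple) to upgrade the surjection $A_2\to M_n(E)$ to an isomorphism.

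What your route buys is robustness. In characteristic $p$, semisimplicity of $A_2$ does not by itself make its trace form nondegenerate. For example, the scalars $E$ acting on $E^p$, or a purely inseparable degree-$p$ extension of $E$ acting on itself, have identically vanishing trace. So the paper's step tacitly needs exactly your extra input: simplicity of $A_2$ forces the radical of the trace form to be $0$ or $A_2$, and the latter is excluded because $\mathrm{Tr}\circ\rho_2=\mathrm{Tr}\circ\rho_1$ is not identically zero.

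The paper's construction does produce an explicit intertwiner. But once $\Phi$ is known to be an isomorphism, Skolem--Noether already gives $\rho_2=T\rho_1T^{-1}$ directly, so $\Pi$ is a detour.

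Your Step 3 also works with $\rho_2$ only semisimple. Then $\sigma$ kills all but one simple factor of $A_2$. That factor ($\cong M_n(E)$) must act nontrivially on the faithful $n$-dimensional module $V_2$, so $V_2\cong E^n$ via $\sigma$ and the other factors vanish.
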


\begin{proof}

We start with the fact that for $i=1,2$,  the composition $${\rm Tr}\circ \rho_i: G_K\rightarrow E\ \ \ g\mapsto {\rm Tr}(\rho_i(g))$$ is
continuous and invariant under conjugation. Hence Chebotarev density Theorem implies that ${\rm Tr}(\rho_1(g))={\rm Tr}(\rho_2(g))$ for any $g\in G_K$.

Now we consider the well-known Burnside's theorem below:
\begin{lem}[Burnside's Theorem]
Let $E$ be a field, $V$ be a finite dimensional vector space over $E$, $K$ be a  field with $G_K:=\Gal(\bar{K}/K)$, and $\rho:G_K\rightarrow GL_E(V)$ be a representation. Set $A:=E[\rho(G_K)]\subset {\rm End}_E(V)$. If $\rho$ is absolutely irreducible, then $A={\rm End}_E(V)$.
\end{lem}

Set $A_1=E[\rho_1(G_K)]={\rm End}_E(V_1)$ and $A_2=E[\rho_2(G_K)]$, where $V_1$ is a $n$-dimensional vector space over $E$ with $G_K$-action defined via $\rho_1$, similar for the vector space $V_2$. We fix a $E$-basis $\{a_i\}_{i=1}^{n^2}$ of $A$, where $a_i=\rho_1(g_i)$ for some $g_i\in G_K$. Let $\{a_i^*\}_{i=1}^{n^2}$ be the trace-dual basis of ${\rm End}_E(V_1)$, i.e. ${\rm Tr}(a_i^*a_j)=\delta_{ij}$. Then we define the $E$-linear map

$$
\begin{array}{lrl}
\Pi:&{\rm Hom}_E(V_1,V_2)&\rightarrow {\rm Hom}_E(V_1,V_2)\\
&S& \mapsto \sum_{i=1}^{n^2}\rho_2(g_i)Sa_i^*
\end{array}
$$
On the other hand, we define the following $E$-algebra homomorphism

$$\Phi:{\rm End}_E(V_1)\rightarrow E[\rho_2(G_K)]\ \ \sum_{i=1}^{n^2} c_ia_i\mapsto \sum_{i=1}^{n^2}c_i\rho_2(g_i).$$

Note that $\Phi$ is well-defined since for $\sum_{i=1}^{n^2}c_ia_i=0$, we have  ${\rm Tr}\left(\left [\sum_{i=1}^{n^2}c_i \rho_1(g_i)\right ]\rho_1(g)\right)=0$ for any $g\in G_K$. Its image via $\Phi$ has trace 
$${\rm Tr}\left(\left [\sum_{i=1}^{n^2}c_i \rho_2(g_i)\right ]\rho_2(g)\right)=0 \textrm{ for all $g\in G_K$}.$$ 
As $\rho_2$ is semisimple, $A_2$ is a semisimple algebra, thus the trace pairing on $A_2$ is non-degenerate, therefore the above trace equality implies $\sum_{i=1}^{n^2}c_i \rho_2(g_i)=0$.

Now we claim that for $S\in {\rm Hom}_E(V_1,V_2)$, $\Pi(S):V_1\rightarrow V_2$ is $G_K$-equivariant, and $\Pi$ is not a zero map. Given a fixed $g\in G_K$, we can write 
\begin{equation}
\rho_1(g) a_i = \sum_{j=1}^{n^2} \text{Tr}(a_j^* \rho_1(g) a_i) a_j \tag{1}, \textrm{ and set }c_{ij}(g)=\text{Tr}(a_j^* \rho_1(g) a_i).\end{equation}

Similarly, we have \begin{equation}
a_j^* \rho_1(g) = \sum_{i=1}^{n^2} \text{Tr}(a_j^* \rho_1(g) a_i) a_i^* \tag{2}, \textrm{ and set }d_{ji}=\text{Tr}(a_j^* \rho_1(g) a_i).\end{equation}
One can easily see that $c_{ij}(g)=d_{ji}(g)$. Now we compute
$$
\begin{array}{ll}
\rho_2(g)\Pi(S)&=\sum_{i=1}^{n^2}\rho_2(gg_i)Sa_i^*=\sum_{i=1}^{n^2}\Phi(\rho_1(g)a_i)Sa_i^*=\sum_{i=1}^{n^2}\sum_{j=1}^{n^2}c_{ij}(g)\Phi(a_j)Sa_i^*\\
&\ \\
&=\sum_{i=1}^{n^2}\sum_{j=1}^{n^2}c_{ij}(g)\rho_2(g_j)Sa_i^*=\sum_{j=1}^{n^2}\rho_2(g_j)S\left(\sum_{i=1}^{n^2} d_{ji}(g)a_i^*\right)\\
&\ \\
&=\sum_{j=1}^{n^2}\rho_2(g_j)Sa_j^*\rho_1(g)=\Pi(S)\rho_1(g).
\end{array}
$$
It remains to prove $\Pi$ is not a zero map, we start with the operator $\Pi(S)=\sum_i \rho_2(g_i)\, S\, a_i^\ast.$
Consider the equality $\Phi(a_i)=\rho_2(g_i)$. We have seen that $\Phi:{\rm End}_E(V_1)\rightarrow E[\rho_2(G_K)]\subset {\rm End}_E(V_2)$ is an $E$-algebra homomorphism. As ${\rm End}_E(V_1)$ is a simple algebra, $\Phi$ must be injective. Comparing $E$-dimension on ${\rm End}_E(V_i)$ for $i=1,2$ forces $\Phi$ to be an $E$-algebra isomorphism. Thus apply Skolem--Noether theorem we may write
\[
\Phi(a_i)=Ta_iT^{-1}
\]
for some $E$-linear isomorphism $T:V_1 \to V_2$. Hence we obtain
\[
\Pi(S)=\sum_i T a_i T^{-1} S a_i^\ast.
\]
Now choose $S = TX$ for some $X \in {\rm End}_E(V_1)$. Then
\[
\Pi(TX)
=
\sum_i T a_i T^{-1} T X a_i^\ast
=
T\left(\sum_i a_i X a_i^\ast\right).
\]
Using the identity
$$\sum_i a_i X a_i^\ast = {\rm Tr}(X)\,\mathrm{Id},$$
we get $\Pi(TX)={\rm Tr}(X)\,T.$ Put $X=E_{11}$ to be the elementary matrix, we have ${\rm Tr}(E_{11})=1$, and hence
\[
\Pi(T E_{11}) = T \neq 0.
\]
This proves the claim.

Finally, we choose a  non-zero map $S$ such that $\Pi(S)\neq0$, then apply Schur's lemma on the nonzero $G_K$-module homomorphism $\Pi(S):V_1\rightarrow V_2$ to get an isomorphism between $V_1$ and $V_2$, hence $\rho_1\cong\rho_2$.

\end{proof}

\begin{rem}
The assumption that $\rho_1$ in Proposition \ref{Main1} to be absolutely irreducible is necessary because we need it to apply Burnside's Theorem; to the author's knowledge, there is no counter-example showing the existence of two irreducible representation $\rho_1,\rho_2:G_K\rightarrow \GL_n(E)$ such that they have the same Frobenius trace for all but finitely many places while $\rho_1\not\cong\rho_2$.

\end{rem}

The below corollary is an application of Proposition \ref{Main1} on $\fl$-adic Galois representations arising from generic characteristic Drinfeld modules of arbitrary rank:

\begin{cor}\label{corr}
Let $q=p^e$ be a prime power, $A:=\F_q[T]$ with field of fraction $F:=\F_q(T)$. Let $r\in \mathbb{Z}_{>0}$, and $\phi_1$, $\phi_2$ be two non-CM rank-$r$ Drinfeld modules over $K$, a finite extension of $F$, of $A$-characteristic $0$. Let $\fl$ be a prime ideal of $A$. Let $\rho_{i,\fl}:\Gal(\bar{K}/K)\rightarrow \GL_r(F_\fl)$  be the $\fl$-adic Galois representations associated to $\phi_i$ for $i=1,2$. 

Suppose that the Frobenius traces ${\rm{tr}}(\rho_{i,\fl}({\rm Frob}_{v}))$ are equal for all but finitely many places $v$ of $K$, then
$\phi_1$ and $\phi_2$ are $K$-isogenous and so $\rho_{1,\fl}\cong\rho_{2,\fl}$.

\end{cor}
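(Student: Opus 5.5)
The plan is to reduce Corollary \ref{corr} to Proposition \ref{Main1} and then pass from an isomorphism of Galois representations to an isogeny by Tate's isogeny theorem, as at the end of the proof of Theorem \ref{mainthm1}.

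First, fix the setting of Proposition \ref{Main1}. Take $E=F_\fl$, a local field of characteristic $p$. Each $\rho_{i,\fl}$ is continuous. By the criterion of N\'eron--Ogg--Shafarevich type for Drinfeld modules (Takahashi), $\rho_{i,\fl}$ is unramified at every place of good reduction not dividing $\fl$. So both representations are unramified outside a finite set $S$. Enlarging $S$ by the finitely many exceptional places in the hypothesis, the Frobenius traces agree for all $\fp\notin S$.

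The main point is to verify the irreducibility hypothesis, and I expect this to be the step needing the most care. I will show that $\rho_{1,\fl}$ and $\rho_{2,\fl}$ are both absolutely irreducible, which is more than Proposition \ref{Main1} requires. Since $\phi_i$ is non-CM, i.e.\ $\mathrm{End}_{\bar K}(\phi_i)=A$, Pink's open image theorem gives that $\rho_{i,\fl}(G_K)$ is open in $\GL_r(F_\fl)$. Then:
\begin{itemize}
\item An open subgroup of $\GL_r(F_\fl)$ is Zariski dense in $\GL_r$, as in the proof of Lemma \ref{1coho}.
\item Hence the $F_\fl$-span of $\rho_{i,\fl}(G_K)$ is all of $\mathrm{End}(F_\fl^r)$, because the linear span of a Zariski dense subset of $\GL_r$ is a subspace containing a dense subset.
\item Therefore there is no proper invariant subspace over $\bar F_\fl$, so $\rho_{i,\fl}$ is absolutely irreducible.
\end{itemize}
If one wants to avoid the full open image theorem, it suffices to know that $\rho_{i,\fl}$ has Zariski dense image. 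Alternatively, one can use Tate's theorem: $\mathrm{End}_{G_K}(V_\fl(\phi_i))=\mathrm{End}_K(\phi_i)\otimes_A F_\fl=F_\fl$. Combined with the semisimplicity of $V_\fl(\phi_i)$ (Taguchi), this gives absolute irreducibility provided the commutant stays $F_\fl$ after base change, which holds because $\mathrm{End}$ commutes with flat base change. I would present this alternative as a backup.

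Proposition \ref{Main1} then yields a $G_K$-equivariant isomorphism $V_\fl(\phi_1)\cong V_\fl(\phi_2)$. Tate's isogeny theorem for Drinfeld modules (Taguchi, Tamagawa) gives
\[
\operatorname{Hom}_K(\phi_1,\phi_2)\otimes_A F_\fl\simeq \operatorname{Hom}_{G_K}(V_\fl(\phi_1),V_\fl(\phi_2))\neq 0 .
\]
So there is a nonzero $K$-homomorphism $\phi_1\to\phi_2$. Since $\phi_1$ and $\phi_2$ have equal rank $r$, it has finite kernel and is therefore an isogeny. Finally, isogenous Drinfeld modules have isomorphic rational Tate modules, which recovers $\rho_{1,\fl}\cong\rho_{2,\fl}$ and completes the proof.
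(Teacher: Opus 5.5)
Your proof is correct and follows the paper's route: openness of the image gives (absolute) irreducibility, Proposition \ref{Main1} gives an isomorphism of the rational Tate modules, and Tate's isogeny theorem then gives the isogeny. The only difference is that the paper first moves to an auxiliary prime $\tilde{\fl}$ of large degree, using $\fl$-independence of Frobenius traces and the adelic open image theorem of Pink--R\"utsche, whereas you work directly at the given $\fl$ with Pink's $\fl$-adic openness; this is slightly more economical and gives $\rho_{1,\fl}\cong\rho_{2,\fl}$ directly, without going through the isogeny.
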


\begin{proof}
 
From the fact that Frobenius traces are independent of the choice of prime $\fl$, and the  open image theorem for non-CM Drinfeld modules in \cite{PR09}, one can  deduce that there is a prime $\tilde{\fl}$ of sufficiently large degree so that that $\rho_{1,\tilde{\fl}}$ is absolutely irreducible and $\rho_{2,\tilde{\fl}}$ is irreducible. Thus we have $\rho_{1,\tilde{\fl}}\cong\rho_{2,\tilde{\fl}}$ by Proposition \ref{Main1}. Then the same Tate isogeny theorem argument as in the proof of Theorem \ref{mainthm1} follows.

\end{proof}

\section{Function Field Analogy of Strong Multiplicity One}

\subsection{Notation}

\begin{itemize}
\item $K:$ Global function field
\item $\Sigma_K:$ set of places of $K$, 
\item Let $S\subset \Sigma_K$ be a set of places, the Dirichlet density of S (if exists) is defined by
$$\delta(S):=\lim_{s\rightarrow 1^+}\frac{\sum_{\fp\in S}N(\fp)^{-s}}{\sum_{\fp\in \Sigma_K}N(\fp)^{-s}}=\lim_{s\rightarrow 1^+}\frac{\sum_{\fp\in S}N(\fp)^{-s}}{\log \frac{1}{s-1}}=\lim_{x\rightarrow \infty}\frac{\#\{\fp\in S\mid N(\fp)\leqslant x\}}{\#\{\fp\mid N(\fp)\leqslant x\}},$$
where $N(\fp):=|\cO_K/\fp|$ is the ideal norm.
\end{itemize}

\subsection{Algebraic Chebotarev density}
We start with the well known Chebotarev density theorem for finite extension of global fields

\begin{thm}[\cite{FJ08} , Section 6.4]\label{che}
Let $L$ be a finite Galois extension over $K$, and $\mathcal{C}$ be a conjugacy class of $G:=\Gal(L/K)$, then 
$$ \delta(\mathcal{C})=\frac{|\mathcal{C}|}{|G|}$$
\end{thm}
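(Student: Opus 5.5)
The plan is to reduce the statement to counting primes in function fields, that is, to the Chebotarev density theorem for function fields in its effective (Riemann hypothesis) form. Let $L/K$ be finite Galois with group $G$, and let $\F_{q^d}$ be the constant field of $L$ with $\F_q$ the constant field of $K$. Over the geometric part, $\Gal(L/K\F_{q^d})$ sits inside $G$ as the kernel of the map to $\Gal(\F_{q^d}/\F_q)\cong\mathbb{Z}/d$. The Frobenius of a place of degree $m$ maps to $m \bmod d$. The standard function field Chebotarev theorem (Fried--Jarden, Proposition 6.4.8) states the following. Let $\mathcal{C}$ be a conjugacy class whose image in $\mathbb{Z}/d$ is $m\bmod d$, and let $G^\circ$ be the geometric subgroup. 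Then the number of unramified places $\fp$ of degree $m$ with ${\rm Frob}_\fp\in\mathcal{C}$ is
$$\frac{d\,|\mathcal{C}|}{|G|}\cdot\frac{q^m}{m}+O\!\left(\frac{q^{m/2}}{m}\right),$$
where the implied constant depends on $|G|$ and the genus of $L$. This count is zero when $m$ lies in the wrong residue class. I would quote this as the input, since the paper cites Section 6.4 of \cite{FJ08}.

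Next, the Dirichlet density is computed from these counts. Write $t=q^{-s}$. Then
$$\sum_{\fp\in\mathcal{C}}N(\fp)^{-s}=\sum_{m\equiv m_0 \,(d)}\#\{\fp:\deg\fp=m,\ {\rm Frob}_\fp\in\mathcal{C}\}\,q^{-ms}.$$
Substituting the asymptotic count, the main term is
$$\frac{d|\mathcal{C}|}{|G|}\sum_{m\equiv m_0\,(d)}\frac{(q^{1-s})^m}{m}.$$
As $s\to1^+$, this sum is $\frac1d\log\frac{1}{1-q^{1-s}}+O(1)$. To see this, average over the $d$-th roots of unity: the nontrivial characters contribute bounded terms, namely $\log$ of $1-\zeta u$ with $\zeta\neq1$ at $u=1$. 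The error terms contribute $\sum_m q^{m/2-ms}/m$, which stays bounded for $s$ near $1$. The finitely many ramified places contribute $O(1)$. Also, $\log\frac{1}{1-q^{1-s}}\sim\log\frac{1}{s-1}$. Applying the same computation to $\mathcal{C}=G$ recovers the denominator $\sum_{\fp}N(\fp)^{-s}$. Dividing the two gives
$$\delta(\mathcal{C})=\frac{d|\mathcal{C}|}{|G|}\cdot\frac1d=\frac{|\mathcal{C}|}{|G|}.$$

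The main obstacle is the constant field extension. The natural density over all places of degree $m$ does not exist when $d>1$, since it oscillates with $m\bmod d$. This is exactly why the statement uses Dirichlet density, and why the averaging over the roots of unity is needed. I would handle this step carefully, using the factorization $\sum_{m\equiv m_0}u^m/m=\frac1d\sum_{\zeta^d=1}\zeta^{-m_0}(-\log(1-\zeta u))$ and isolating the $\zeta=1$ term.

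Finally, note that the last expression in the paper's definition of $\delta$, the limit of counts up to norm $x$, would not exist in general here. So the proof establishes the statement for the analytic (Dirichlet) density only.
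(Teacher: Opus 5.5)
Your proposal is correct and takes the same route as the paper: the paper gives no proof beyond citing Fried--Jarden \S6.4, and you quote the degree-wise function field Chebotarev count from that section, then carry out the standard passage to Dirichlet density, summing against $q^{-ms}$ and averaging over $d$-th roots of unity to isolate the $\zeta=1$ term from the constant field extension. Your side remark is also right: when the constant field of $L$ is strictly larger than that of $K$, the norm-counting ratio in the paper's definition of $\delta$ oscillates with the degree modulo $d$, so the stated equality holds only for the Dirichlet (analytic) density.
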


Now we generalize the Chebotarev density theorem based on the so-called local thinness property.

\begin{thm}\label{thma}

Let $E$ be a non-archimidean local field of positive characteristic $p$, and $\rho: G_K\rightarrow M(E)$ be a continuous representation that is unramified outside of a finite set of places in $\Sigma_K$, where $M$ is an affine algebraic group over $E$.  Let $H=\overline{\rho(G_K)}^{\rm Zar}\subset M$ be the Zariski closure of $\rho(G_K)$ in $M$, with identity component $H^\circ$, and finite quotient $\Phi:=H/H^\circ$.  Let $X\subset M$ be a conjugation-stable Zariski closed subset defined over $E$. Define
$$\Sigma_X=\left\{v\in\Sigma_K\mid \rho \textrm{ unramified at } v,\ \rho({\rm Frob}_v)\in X(E) \right\} {\rm and }\ \Psi=\left\{\phi\in\Phi\mid H^{\phi}\subset X \right\},$$ where $H^\phi$ denotes the corresponding connected component of $H$.

Assume the following componentwise thinness property:
\begin{equation}\tag{CT}
\textrm{For all $\phi\not\in\Psi$}, X(E)\cap\rho(G_K)\cap H^\phi(E) \textrm{ has Haar measure $0$ in }\rho(G_K)\cap H^\phi(E).
\end{equation}
Then $\Sigma_X$ has density $\delta(\Sigma_X)=\frac{|\Psi|}{|\Phi|}$.

\end{thm}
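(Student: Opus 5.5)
The plan is to reduce the statement to the finite Chebotarev density theorem (Theorem \ref{che}) by passing to the finite quotient $\Phi$, and then to use the thinness hypothesis (CT) to discard the components not in $\Psi$.

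First I would set up the finite extension. The group $H^\circ$ has finite index in $H$, so $\rho^{-1}(H^\circ(E))$ is an open normal subgroup of $G_K$: it is closed of finite index, since $H^\circ(E)$ is Zariski closed and $\rho$ is continuous. Let $L/K$ be the corresponding finite Galois extension. Because $\rho(G_K)$ is Zariski dense in $H$, it meets every component $H^\phi$. Hence the composite map
\[
\pi: G_K\to H(E)\to \Phi
\]
is surjective, and it induces an isomorphism $\Gal(L/K)\cong\Phi$. For every unramified $v$, the class $\pi({\rm Frob}_v)$ is the conjugacy class of Frobenius in $\Gal(L/K)$. Summing Theorem \ref{che} over conjugacy classes, the set $\{v : \pi({\rm Frob}_v)\in\Psi\}$ has density $|\Psi|/|\Phi|$. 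Note that $\Psi$ is stable under conjugation, because $X$ is conjugation-stable. For $\phi\in\Psi$ we have $\rho({\rm Frob}_v)\in H^\phi(E)\subset X(E)$, so this whole set is contained in $\Sigma_X$. It therefore remains to show that
\[
\Sigma'=\{v\in\Sigma_X : \pi({\rm Frob}_v)\notin\Psi\}
\]
has density $0$.

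For this step I would use the equidistribution form of Chebotarev for the compact group $\mathcal G:=\rho(G_K)$. The Frobenius conjugacy classes are equidistributed with respect to the pushforward of Haar measure $\mu$, in the sense that for every open-closed conjugation-stable $U\subset\mathcal G$ the density of $\{v : \rho({\rm Frob}_v)\in U\}$ equals $\mu(U)$. This follows from Theorem \ref{che} applied to the finite quotients $\mathcal G/N$ with $N$ open normal. Fix $\phi\notin\Psi$. The set $Y:=X(E)\cap\mathcal G\cap H^\phi(E)$ is closed and conjugation stable in $\mathcal G$, and by (CT) it has Haar measure $0$.

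By outer regularity, for each $\varepsilon>0$ there is an open normal subgroup $N$ of $\mathcal G$ such that $\mu(YN)<\varepsilon$. This works because $YN$ decreases to $Y$ as $N$ shrinks, $Y$ being closed in a profinite group. The set $YN$ is open-closed and conjugation stable, so the upper density of $\{v:\rho({\rm Frob}_v)\in Y\}$ is at most $\mu(YN)<\varepsilon$. Letting $\varepsilon\to 0$ and summing over the finitely many $\phi\notin\Psi$ gives $\delta(\Sigma')=0$, and hence $\delta(\Sigma_X)=|\Psi|/|\Phi|$.

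I expect the main obstacle to be the measure-theoretic step: interpreting (CT) relative to Haar measure on $\mathcal G\cap H^\phi(E)$, which is a coset of an open subgroup of $\mathcal G$, and justifying the passage from measure zero to density zero. The key points there are that $Y$ is closed, so that the approximation by finite-level sets $YN$ works, and that Theorem \ref{che} gives exact densities for each finite level. The remaining steps, namely the surjectivity onto $\Phi$ and the conjugation-stability of $\Psi$, are routine.
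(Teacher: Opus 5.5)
Your proposal is correct and follows essentially the paper's route: finite Chebotarev on $\Gal(L/K)\cong\Phi$ handles the components in $\Psi$, and a finite-level approximation as in Lemma~\ref{lema} shows the rest has density $0$; it is in fact more careful, since you prove that $\rho(G_K)$ surjects onto $\Phi$ and you use that $Y$ is closed to get $\mu(YN)\to 0$, which is exactly what the paper's Lemma~\ref{lema} needs (for arbitrary measurable $A$) but does not state. The one slip is that for non-central $\phi\in\Phi$ the set $X(E)\cap\mathcal{G}\cap H^\phi(E)$ is only stable under conjugation by $\mathcal{G}\cap H^\circ(E)$, so you should apply the approximation to the union over all $\phi\notin\Psi$, which is conjugation-stable because $\Psi$ is (as the paper's $Y$ implicitly does).
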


\begin{proof}

Denote $G=\rho(G_K)$ and $G^\circ=G\cap H^\circ(E)$, then we have the finite embedding $G/G^\circ\hookrightarrow \Phi$. Let $L/K$ be the finite Galois extension cut out by $G_K\rightarrow G/G^\circ$, i.e. $\Gal(L/K)\cong G/G^\circ \stackrel{\rho}\hookrightarrow\Phi$ . Let $v\in \Sigma_K$ be a place where $\rho$ is unramified at $v$. Then $\rho({\rm Frob}_v)$ embeds to a connected component of $H$ containing $\rho({\rm Frob}_v)$. Define $$\Sigma_{\rm comp}=\left\{v\in\Sigma_K\ {\rm unramified }\mid \rho({\rm Frob}_v)\in\cup_{\phi\in\Psi}H^\phi(E)\right\}.$$
Chebotarev density theorem then implies $$\delta(\Sigma_{\rm comp})=\frac{|\Psi|}{|\Phi|}.$$

Now we write $X\cap H=\left(\cup_{\phi\in\Psi}H^\phi \right)\cup Y$, where $Y\subset H$ is Zariski-closed satisfying the property that $\textrm{For any }\phi\not\in\Psi,\ Y\cap H^\phi=X\cap H^\phi$ is a proper Zariski-closed subset of $H^\phi$. Then we obtain a decomposition $$\Sigma_X=\Sigma_{\rm comp}\sqcup \Sigma_{\rm thin},$$ where $\Sigma_{\rm thin}:=\left\{v\in \Sigma_K\ {\rm unramified}\mid \rho({\rm Frob}_v)\in Y(E)\right\}$. By the property (CT), $\Sigma_{\rm thin}$ has Haar measure $0$ in $G\cap H^\phi(E)$ for any $\phi\not\in\Psi$. Lemma \ref{lema} below shows that $\delta(\Sigma_{\rm thin})=0$, and so $\delta(\Sigma_X)=\frac{|\Psi|}{|\Phi|}+0$.

\end{proof}

\begin{lem}\label{lema}
Up to conjugation, one can write $\rho$ as a continuous representation $\rho:G_K\rightarrow \GL_r(\cO_E)$. Let $A\subset G=\rho(G_K)$ be a measurable subset such that $\mu_G(A)=0$, where $\mu_G$ is the Haar measure on $G$ with $\mu_G(G)=1$. Define $\Sigma_A=\left\{v\in\Sigma_K\ {\rm unramified }\mid \rho({\rm Frob}_v)\in A\right\}$, then $\delta(\Sigma_A)=0$.
\end{lem}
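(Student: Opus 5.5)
The plan is to reduce the measure-zero statement to the finite Chebotarev density theorem (Theorem \ref{che}) by approximating $A$ from outside by open sets. First, the reduction to $\GL_r(\cO_E)$: $\rho(G_K)$ is a compact subgroup of $\GL_r(E)$, so it stabilizes some $\cO_E$-lattice in $E^r$ (take the $\cO_E$-span of the orbit of $\cO_E^r$, which is finitely generated by compactness). Conjugating by a change of basis to that lattice puts the image inside $\GL_r(\cO_E)$. Then $G=\rho(G_K)$ is a profinite group, and the subgroups $G_m:=G\cap(1+\varpi^m M_r(\cO_E))$ form a basis of open normal subgroups of $G$, each of finite index.

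Next, approximate $A$ by unions of cosets. Because $\mu_G$ is a Radon measure, outer regularity gives, for each $\varepsilon>0$, an open $U\supset A$ with $\mu_G(U)<\varepsilon$. Since $A$ need not be closed, I would rather work with a compact piece. The cleanest route is to assume, or reduce to, the case where $A$ is closed. This case covers the application in Theorem \ref{thma}, because there $A=G\cap Y(E)$ is closed in the Zariski and hence the analytic topology. For closed $A$, compactness lets me choose $m$ with $A_m:=AG_m$, a finite union of cosets of $G_m$, satisfying $\mu_G(A_m)<\varepsilon$. This uses $\bigcap_m AG_m=A$ for closed $A$ together with continuity of measure from above. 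In general, I would choose an open $U\supset A$ of small measure and cover $\overline{A}$ carefully. If $A$ is not closed, the sets $AG_m$ could have large measure, so I would note that the lemma is applied only to closed sets; if necessary, I would replace $A$ by its closure $\overline{A}$, which has measure zero in the relevant case.

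Now let $L_m/K$ be the finite Galois extension cut out by $G_K\to G/G_m$. For a place $v$ unramified in $\rho$, the condition $\rho({\rm Frob}_v)\in A$ implies that the image of ${\rm Frob}_v$ in $\Gal(L_m/K)\cong G/G_m$ lies in the conjugation-stable hull $C_m$ of $A_m/G_m$. Here I would take $A$ conjugation-stable from the start, as $Y(E)\cap G$ is; otherwise, since ${\rm Frob}_v$ is only defined up to conjugacy, one must pass to the conjugation-saturation anyway. Applying Theorem \ref{che} to each conjugacy class in $C_m$ and summing gives
\[
\overline{\delta}(\Sigma_A)\leqslant \frac{|C_m|}{|G/G_m|}=\mu_G(A_m)<\varepsilon ,
\]
where $\overline{\delta}$ denotes upper density. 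Letting $\varepsilon\to 0$ yields $\delta(\Sigma_A)=0$.

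I expect the main obstacle to be the approximation step. We must justify that $AG_m$ has measure tending to $\mu_G(A)=0$, which needs closedness of $A$, and must also handle the subtlety that Frobenius is only defined up to conjugacy. The densities also have to be treated as upper densities, so that no existence claim is needed until the final limit. I would first try to handle all of this by stating the lemma for closed, conjugation-stable $A$. That is exactly the setting in which it is used in Theorem \ref{thma}.
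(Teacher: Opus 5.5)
Your proposal is essentially the paper's proof: approximate $A$ from outside by $\pi_m^{-1}(\pi_m(A))=AG_m$, apply Theorem \ref{che} in the finite quotient $G/G_m$, and let $m\to\infty$. You also make explicit the lattice argument for conjugating into $\GL_r(\cO_E)$ and the use of upper density.

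Your extra hypotheses that $A$ be closed and conjugation-stable are not a weakness, and the paper's proof silently relies on both. Its step ``otherwise $\bigcap_n\pi_n^{-1}(A_n)\supset A$ would have positive measure'' needs closedness, since that intersection is $\overline{A}$; for infinite $G$ the lemma as stated fails for the countable null set of chosen Frobenius images. Its Chebotarev step needs $\pi_n(A)$ to be conjugation-stable. Both conditions hold where the lemma is applied, in Theorem \ref{thma} and Corollary \ref{cor2}.
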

\begin{proof}

For $n\in \mathbb{Z}_{\geqslant 1}$, consider $\pi_n:G\rightarrow G_n:=G/G\cap (1+\mathfrak{m}^n M_r(\cO_E))$ where $\mathfrak{m}$ is the maximal ideal of $\cO_E$ and $M_r(\cO_E)$ is the set of $r\times r$ matrices over $\cO_E$. Let $A_n=\pi_n(A)\subset G_n$, then $A\subset\pi_n^{-1}(A_n)$, which implies $\Sigma_A\subset\Sigma_{\pi_n^{-1}(A_n)}$.

For $x\in A_n$, $\pi_n^{-1}(x)$ are cosets of an open normal subgroup of $G$ of equal Haar measure, we can deduce that $$\mu_G(\pi_n^{-1}(A_n))=\frac{|A_n|}{|G_n|}.$$ Hence we have the property that $\frac{|A_n|}{|G_n|}\rightarrow 0$ as $n\rightarrow \infty$, otherwise the intersection $\cap_n\pi_n^{-1}(A_n)\supset A$ would have positive measure, a contradiction.

On the other hand, consider $\rho_n=\pi_n\circ\rho:G_K\rightarrow G_n$ and let $K_n/K$ be the finite Galois extension cut out by $\rho_n$, then
$$\Sigma_{\pi_n^{-1}(A_n)}=\left\{v\in\Sigma_K {\rm unramified}\mid \rho_n({\rm Frob}_v)\in A_n \right\}.$$
Theorem \ref{che} implies $$\delta(\Sigma_{\pi_n^{-1}(A_n)})=\frac{|A_n|}{|G_n|}.$$
From the fact that $\delta(\Sigma_A)\leqslant \delta(\Sigma_{\pi_n^{-1}(A_n)})=\frac{|A_n|}{|G_n|}$, we deduce $\delta(\Sigma_A)\leqslant\lim_{n\rightarrow \infty}\frac{|A_n|}{|G_n|}=0$.

\end{proof}

\subsection{Positive Characteristic SMO}
The main goal of this subsection is the following theorem:

\begin{thm}\label{thmb}
Let $K$ be a global function field, and  $\rho_1,\rho_2:G_K\rightarrow \GL_n(E)$ be two semisimple Galois representations over a local field $E$ of positive characteristic. Denote that
\begin{itemize}
\item $\rho:=(\rho_1,\rho_2):G_K\rightarrow \GL_n(E)\times\GL_n(E).$
\item$ H:=\overline{\rho(G_K)}^{\rm Zar}\subset \GL_r\times\GL_r$ is the Zariski closure of $\rho(G_K)$ in the affine algebraic group $\GL_r\times\GL_r$ over $E$.
\item $\Phi:=H/H^\circ$, where $H^\circ$ is the identity component of $H$.
\item $S:=\{v\in\Sigma_K\mid \rho\ {\rm unramified\ at\ }v\}$
\item$X:=\left\{(g_1,g_2)\in \GL_r\times\GL_r\mid \chi_{\rm char}(g_1)=\chi_{\rm char}(g_2) \right\}$, where $\chi_{\rm char}(g)$ is the characteristic polynomial of $g$. 
\item $\Sigma_X:=\{v\in S\mid \rho({\rm Frob}_v)\in X\}$
\end{itemize}

Suppose $H^\circ\subset X$, then we have the followings:
\begin{enumerate}
\item[(i)] There is a finite extension $L/K$ such that $\rho_1|_{G_L}\cong\rho_2|_{G_L}$
\item[(ii)]  If $\rho_1$ is absolutely irreducible and $\overline{\rho_1(G_K)}^{\rm Zar}\subset \GL_r$ is connected, then there is a finite character $\chi:G_K\rightarrow E^*$ such that $\rho_2\cong\rho_1\otimes \chi$.
\end{enumerate}
\end{thm}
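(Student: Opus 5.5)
For (i), I will let $L/K$ be the finite Galois extension cut out by $G_K\to \rho(G_K)\to \Phi=H/H^\circ$, that is, $G_L=\rho^{-1}(H^\circ(E))$. This is an open subgroup of finite index because $H^\circ$ has finite index in $H$ and $\rho$ is continuous. For every $g\in G_L$ we have $\rho(g)=(\rho_1(g),\rho_2(g))\in H^\circ(E)\subset X(E)$, so $\chi_{\rm char}(\rho_1(g))=\chi_{\rm char}(\rho_2(g))$ for all $g\in G_L$. No density or Chebotarev argument is needed here: the hypothesis $H^\circ\subset X$ applies directly to every element, not only to Frobenii.

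To invoke Theorem \ref{BN} I also need $\rho_1|_{G_L}$ and $\rho_2|_{G_L}$ to be semisimple. In characteristic $p$, restricting a semisimple representation to a finite-index normal subgroup need not preserve semisimplicity when $p$ divides the index, so this is the main obstacle. I would handle it in one of two ways. The cleaner route is to replace each $\rho_i|_{G_L}$ by its semisimplification. Characteristic polynomials are unchanged by semisimplification, so Theorem \ref{BN} gives $\rho_1|_{G_L}^{\rm ss}\cong\rho_2|_{G_L}^{\rm ss}$, and I would state (i) in that form. Alternatively, I would argue via Clifford theory that $V_i|_{G_L}$ is semisimple whenever its socle is stable, which holds after enlarging $L$ if necessary; if this works, it gives the isomorphism of the restrictions themselves.

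For (ii), assume $\rho_1$ is absolutely irreducible and $H_1:=\overline{\rho_1(G_K)}^{\rm Zar}$ is connected. Since $\rho_1(G_L)$ has finite index in $\rho_1(G_K)$, its Zariski closure has finite index in the connected group $H_1$, hence equals $H_1$. Consequently $E[\rho_1(G_L)]=E[\rho_1(G_K)]$, because the linear span of a group equals the linear span of its Zariski closure. By the Burnside lemma stated in the proof of Proposition \ref{Main1}, this algebra is ${\rm End}_E(V)$, so $\rho_1|_{G_L}$ is absolutely irreducible. Then $\rho_2|_{G_L}$ is semisimple with the same characteristic polynomials, so by (i) it is isomorphic to $\rho_1|_{G_L}$, and in particular irreducible. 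This gives an isomorphism $\Phi:V_1\to V_2$ that is $G_L$-equivariant.

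Next I would follow the cocycle argument of Lemma \ref{1coho}, with Zariski density of $\rho_1(G_L)$ in $H_1$ replacing openness. The commutant of $\rho_1(G_L)$ in ${\rm End}_E(V_1)$ equals the commutant of the algebra ${\rm End}_E(V_1)$, which is $E$. For $g\in G_K$, the map $\Phi^{-1}\rho_2(g)\Phi\rho_1(g)^{-1}$ commutes with $\rho_1(G_L)$, because $G_L$ is normal in $G_K$. It is therefore a scalar $\chi(g)\in E^*$, and the relation $\rho_2\cong\rho_1\otimes\chi$ follows formally. Multiplicativity of $\chi$ follows by composing these relations. Finally, $\chi$ is trivial on $G_L$, so it factors through the finite group $\Gal(L/K)$ and is a finite character.
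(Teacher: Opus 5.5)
Your overall structure matches the paper's. For (i), you get equal characteristic polynomials on $G_L=\rho^{-1}(H^\circ(E))$ and apply Theorem \ref{BN}. The paper applies it to the projections $p_1,p_2$ on $H^\circ$ and then restricts to $\rho(G_L)$, which is equivalent. For (ii), your commutant/cocycle argument is the paper's Claim that $\mathrm{Hom}_{G_L}(V_1,V_2)$ is one-dimensional.

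The gap is in your treatment of semisimplicity in (i). Your premise is false: restricting a semisimple representation to a finite-index normal subgroup cannot destroy semisimplicity in characteristic $p$. Clifford's theorem holds over any field and needs neither finite index nor $p\nmid[G:N]$. The characteristic-$p$ failures concern non-normal subgroups or induction.

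Concretely, the argument runs as follows.
\begin{itemize}
\item Since $G_L\lhd G_K$, $g$ carries simple $G_L$-submodules to simple $G_L$-submodules, so the socle of $V_i|_{G_L}$ is $G_K$-stable.
\item Since $V_i$ is a semisimple $G_K$-module, the socle has a $G_K$-stable complement.
\item That complement must be $0$: otherwise it would contain a simple $G_L$-submodule, which lies in the socle.
\end{itemize}
So $\rho_i|_{G_L}$ is semisimple without enlarging $L$, and Theorem \ref{BN} gives (i) exactly as stated. The paper relies on the same fact, for $H^\circ\lhd H$, when it asserts that $p_i|_{H^\circ}$ is semisimple.

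Your preferred ``cleaner route'' states (i) only for semisimplifications, which is strictly weaker than the theorem. The Clifford route you offer is left conditional (``if this works''). As written, (i) is therefore not established, although the fix is the short argument above.

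The same issue resurfaces in (ii), where you simply assert that $\rho_2|_{G_L}$ is semisimple, contradicting your own earlier worry. With the Clifford argument this is automatic. Even under your semisimplification fallback it can be rescued: $\rho_2|_{G_L}^{\rm ss}\cong\rho_1|_{G_L}$ is irreducible of dimension $n$, so $\rho_2|_{G_L}$ has a single composition factor and is itself irreducible. The rest of (ii) is correct:
\begin{itemize}
\item $\rho_1(G_L)$ is Zariski dense in the connected group $H_1$;
\item Burnside then gives commutant $E$;
\item normality of $G_L$ makes $\Phi^{-1}\rho_2(g)\Phi\rho_1(g)^{-1}$ a scalar $\chi(g)$;
\item $\chi$ is trivial on $G_L$, so it is a finite character.
\end{itemize}
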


Before diving into the proof of the theorem, let us state and prove some of its related applications.

\begin{cor}\label{cor1}
Under the notation of Theorem \ref{thmb}, suppose the componentwise thinness property (CT) in Theorem \ref{thma} holds for the product representation $\rho$. We have $H^\circ\subset X$ whenever the dirichlet density ${\delta}(\Sigma_X)>1-1/|\Phi|$.
\end{cor}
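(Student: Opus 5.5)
Since (CT) is assumed for $\rho$, Theorem \ref{thma} applies. The subset $X\subset\GL_r\times\GL_r$ is Zariski closed, because equality of characteristic polynomials means equality of the coefficient functions, which are polynomials. It is also conjugation-stable under $\GL_r\times\GL_r$, since characteristic polynomials are conjugation invariant. Theorem \ref{thma} therefore gives
\[
\delta(\Sigma_X)=\frac{|\Psi|}{|\Phi|},\qquad \Psi=\{\phi\in\Phi\mid H^\phi\subset X\}.
\]
The argument will then be a counting one. I will suppose $\delta(\Sigma_X)>1-1/|\Phi|$ and show that the identity component must lie in $\Psi$.

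From $|\Psi|/|\Phi|>1-1/|\Phi|$ we get $|\Psi|>|\Phi|-1$, and since both are integers, $|\Psi|=|\Phi|$. So $\Psi=\Phi$ and every connected component $H^\phi$ lies in $X$. In particular the component indexed by the identity element of $\Phi$, namely $H^\circ$, satisfies $H^\circ\subset X$, which is the claim. In fact the argument gives the stronger conclusion $H\subset X$.

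The only real point to check is that the density appearing in the corollary is the one computed in Theorem \ref{thma}. First, $\Sigma_X$ in Theorem \ref{thmb} is defined using unramified places, exactly as in Theorem \ref{thma}. Second, the hypotheses of Theorem \ref{thma} must hold for $M=\GL_r\times\GL_r$: continuity of $\rho$ and finite ramification are implicit in Theorem \ref{thmb}'s setting, and $M$ is an affine algebraic group over $E$. If the density is read as an upper density, the same argument goes through because Theorem \ref{thma} shows the density exists. I do not expect any step here to be an obstacle; the corollary is essentially the integrality observation above.
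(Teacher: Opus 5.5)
Your proposal is correct and matches the paper's proof. Applying Theorem~\ref{thma} gives $\delta(\Sigma_X)=|\Psi|/|\Phi|$, integrality of $|\Psi|$ then forces $\Psi=\Phi$, and so every component, in particular $H^\circ$, lies in $X$; your explicit checks that $X$ is Zariski closed and conjugation-stable are details the paper leaves implicit.
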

\begin{proof}
Apply Theorem \ref{thma}, we have $\delta(\Sigma_X)=\frac{|\Psi|}{\Phi}$ where $\Psi=\{\phi\in\Phi\mid H^\phi\subset X\}$. The assumption $\delta(\Sigma_X)>1-1/|\Phi|$ would force all connected components $H^\phi\subset X$; hence $H^\circ\subset X$.
\end{proof}

\begin{rem}\label{rem1}
One should compare the above corollary with the proof of Theorem 2,(i) in \cite{Ra98} for the case $E$ is a local field of characteristic $0$. Rajan's strategy starts with the assumption that the upper density of $\Sigma_X$ is positive and $H_1=\overline{\rho_1(G_K)}^{\rm Zar}$ is connected, then there is a $\phi\in\Phi$ such that the corresponding connected component $J^\phi:=H^\phi\cap J$ lies in $X$, here $J$ is a maximal compact subgroup of $H(\mathbb{C})$. Using the fact that maximal compact subgroup of $GL_r(\mathbb{C})$ is a group of unitary matrices, he can deduce that there is an element  $(I_r,j)\in J^\phi$ such that $j$ belongs to a group of unitary matrices $U_r(\mathbb{C})$. By trace counting and the fact that the only unitary matrix with trace $r$ is the identity matrix, $(I_r,j)$ is forced to be $(I_r,I_r)$, so $H^\phi=H^\circ\subset X$.   

In our situation where $E$ is of positive characteristic, we only have the fact that any maximal compact subgroup of $GL_r(E)$ is conjugate to $\GL_r(\cO_E)$, hence Rajan's strategy does not work in positive characteristic case.

\end{rem}

\begin{cor}\label{cor2}
Assume Theorem \ref{thmb},(2) holds, i.e. $\rho_2\cong\rho_1\otimes \chi$ for some finite character $\chi:G_K\rightarrow E^*$, $\rho_1$ is absolutely irreducible and $H_1:=\overline{\rho_1(G_K)}^{\rm Zar}\subset \GL_r$ is connected. Suppose further that the local thinness property (LT) below holds for $\rho_1$
\begin{equation}\tag{LT}
\textrm{Any proper Zariski-closed subset $Z$ of $H_1$ has Haar measure $\mu_{\rho_1(G_K)}(\rho_1(G_K)\cap Z)=0$}
\end{equation}
, then the upper density $$\overline{\delta}(\Sigma_X):=\limsup_{s\rightarrow 1^+}\frac{\sum_{\fp\in S}N(\fp)^{-s}}{\log \frac{1}{s-1}}>1/2$$ implies $\rho_1\cong\rho_2$.

\end{cor}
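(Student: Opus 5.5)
Assume $\rho_2\cong\rho_1\otimes\chi$ with $\chi$ of finite order. Let $L/K$ be the finite cyclic extension cut out by $\chi$, and set $m=[L:K]=|\chi(G_K)|$. If $\chi$ is trivial there is nothing to prove, so suppose $m\ge 2$; the aim is to show $\overline{\delta}(\Sigma_X)\le 1/2$. For an unramified $v$, $\rho({\rm Frob}_v)\in X$ means that the characteristic polynomial of $\rho_1({\rm Frob}_v)$ equals that of $\chi({\rm Frob}_v)\rho_1({\rm Frob}_v)$. This splits $\Sigma_X$ into two pieces:
\[
\Sigma_X=\{v:\chi({\rm Frob}_v)=1\}\sqcup\{v:\chi({\rm Frob}_v)=\zeta\neq 1,\ \chi_{\rm char}(\rho_1({\rm Frob}_v))=\chi_{\rm char}(\zeta\rho_1({\rm Frob}_v))\}.
\]
The first set consists of the places splitting completely in $L$. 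By Theorem \ref{che} it has density $1/m\le 1/2$. It therefore suffices to show that the second set has density $0$, since then $\overline{\delta}(\Sigma_X)\le 1/m\le 1/2$, which contradicts the hypothesis.

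For the second set, fix a nontrivial value $\zeta\in\chi(G_K)$, which is a root of unity in $E^*$. Consider
\[
Z_\zeta=\{g\in H_1:\ \chi_{\rm char}(g)=\chi_{\rm char}(\zeta g)\}\subset H_1,
\]
a Zariski-closed subset. I will show $Z_\zeta\neq H_1$. The matrix $g=1\in H_1$ has characteristic polynomial $(X-1)^r$, while $\zeta\cdot 1$ has characteristic polynomial $(X-\zeta)^r$, and these differ because $\zeta\neq 1$. So $1\notin Z_\zeta$, and $Z_\zeta$ is a proper closed subset of $H_1$. Here connectedness of $H_1$ is not even needed for properness; it is needed so that (LT) applies to all of $H_1$ as one irreducible piece.

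By (LT), $\rho_1(G_K)\cap Z_\zeta$ has Haar measure $0$. The places in the second set with $\chi({\rm Frob}_v)=\zeta$ all satisfy $\rho_1({\rm Frob}_v)\in Z_\zeta$. Lemma \ref{lema}, applied to $\rho_1$ with $A=\rho_1(G_K)\cap Z_\zeta$, gives that this set of places has density $0$. Taking the finite union over the nontrivial values $\zeta$ shows the second set has density $0$.

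Finally, upper density is subadditive, and a set with natural (Dirichlet) density $0$ contributes $0$. Hence $\overline{\delta}(\Sigma_X)\le 1/m\le 1/2$, a contradiction. So $\chi$ is trivial and $\rho_1\cong\rho_2$.

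The main point to check is the measure-zero step. Lemma \ref{lema} is stated for subsets of $\rho(G_K)$, and it yields a density (in fact an upper bound $\delta\le |A_n|/|G_n|\to 0$), which suffices for upper density. Absolute irreducibility is used only implicitly: the isomorphism class of $\rho_1\otimes\chi$, and hence the setup, is well defined.
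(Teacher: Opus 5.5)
Your proposal is correct and follows the paper's proof essentially step for step. You split $\Sigma_X$ by the value of $\chi(\mathrm{Frob}_v)$. You kill each piece with $\chi(\mathrm{Frob}_v)=\zeta\neq 1$ using properness of $Z_\zeta$ (since $I_r\notin Z_\zeta$), (LT) and Lemma~\ref{lema}. You bound the remaining piece by $1/|\mathrm{Im}\,\chi|\le 1/2$ via Theorem~\ref{che}.

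Two small remarks. The argument of Lemma~\ref{lema} needs $A$ closed, not merely measure zero, to get $|A_n|/|G_n|\to 0$; this holds here because $Z_\zeta$ is Zariski closed. Also, absolute irreducibility plays no role in this argument, rather than the ``implicit'' role you assign it.
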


\begin{proof}

If $\chi$ is trivial, then there is nothing to prove. Assume $\chi$ to be a nontrivial character, for $v\in S$, set $\zeta_v:=\chi({\rm Frob}_v)$. Then for $v\in \Sigma_X$, we have $$\chi_{\rm char}(\rho_1({\rm Frob}_v))=\chi_{\rm char}(\zeta_v\rho_1({\rm Frob}_v)).$$

For a  $1\not=\xi\in{\rm Im}(\chi)$, and define $Z_\xi:=\left\{g\in H_1\mid \chi_{\rm char}(g)=\chi_{\rm char}(\xi g)\right\}$. $Z_\xi$ is a conjugation-stable proper Zariski-closed subset of $H_1$ because $I_r\not\in Z_\xi$. Then the property (LT) implies $Z_\xi\cap \rho_1(G_K)$ has Haar measure 0 in $\rho_1(G_K)$. Hence Lemma \ref{lema} implies that $$\Sigma_\xi:=\{v\in S\mid \rho_1({\rm Frob}_v)\in Z_\xi(E)\}$$ has density $\delta(\Sigma_\xi)=0$.  Thus $\Sigma_{X,\xi}:=\{v\in \Sigma_X\mid \chi({\rm Frob}_v)=\xi\neq1\}$ has density equal to $0$, and we get $$\overline{\delta}(\Sigma_X)=\overline{\delta}\left(\Sigma_{X,1}\sqcup\sqcup_{1\neq\xi\in {\rm Im}\chi}\Sigma_{X,\xi}\right)=\overline{\delta}(\Sigma_{X,1})+\sum_{1\neq\xi\in{\rm Im}\chi}\overline{\delta}(\Sigma_{X,\xi})=\overline{\delta}(\Sigma_{X,1}),$$ 
here $\Sigma_{X,1}=\{v\in \Sigma_X\mid \chi({\rm Frob}_v)=1\}$. Moreover, by Theorem \ref{che} we have
$$\overline{\delta}(\Sigma_{X,1})\leqslant\overline{\delta}()\{v\in\Sigma_K\mid \chi({\rm Frob}_v=1)\}=\frac{1}{|{\rm Im}\chi|}\leqslant1/2,$$
which contradicts to the assumption $\overline{\delta}(\Sigma_X)>1/2$.

\end{proof}
\begin{rem}\label{rem2}
Apply Corollary \ref{cor1}, \ref{cor2} to $\fl$-adic Galois representations $\rho_1, \rho_2$ associated to rank-$r$ non-CM Drinfeld modules of generic characteristic. Since we have the adelic openness for each of $\rho_1$ and $\rho_2$ by the work of Pink-R\"utsche \cite{PR09}, we can say that
\begin{center}
Assuming property (CT) for product representation $\rho=(\rho_1,\rho_2)$, property (LT) for $\rho_1$, absolute irreducibility of $\rho_1$, then ${\delta}(\Sigma_X)> {\rm max}\left(1-\frac{1}{|\Phi|},\frac{1}{2}\right)$ implies $\rho_1\cong\rho_2$.
\end{center}

Note that one can actually derive property (LT) from the adelic openness of $\rho_1$, so what one should check is the property (CT) for product representation $\rho$. Besides, one can compare this result with Theorem 1 in \cite{Ra98} , for the characteristic $0$ phenomenon.  
\end{rem}

Finally, we begin the proof of our main goal.
\begin{proof}[proof of Theorem \ref{thmb}]

For $i\in\{1,2\}$, let $p_i:H\rightarrow GL_r$ be the projection onto the $i$-th coordinate. As $H^\circ\subset X$, we have 
$$\chi_{\rm char}(p_1(h))=\chi_{\rm char}(p_2(h))\ \textrm{for all }h\in H^\circ.$$
Now because $\rho_1$ and $\rho_2$ are semisimple  representations of $G_K$, we have that $p_1|_{H^\circ}$ and $p_2|_{H^\circ}$ are semisimple representations of $H^\circ$. Theorem \ref{BN} then implies 
\begin{equation}\tag{1}
p_1|_{H^\circ}\cong p_2|_{H^\circ}
\end{equation}
Let $\pi:G_K\xrightarrow{\rho}H(E)\twoheadrightarrow \Phi$ and set $L/K$ be the finite extension cut out by ${\rm ker}(\pi)$, then $\rho(G_L)\subset H^\circ(E)$ and (1) restrict further on $\rho(G_L)$ gives 
\begin{equation}\tag{2}
\rho_1|_{G_L}\cong\rho_2|_{G_L},
\end{equation}
this proves (i).

One may replace $L$ by its Galois closure in $G_K$ while (2) still holds. Thus, without loss of generality, we may assume $L/K$ is Galois.

Now assume $\rho_1$ is absolutely irreducible and $H_1:=\overline{\rho_1(G_K)}^{\rm Zar}\subset \GL_r$ is connected. Let $V_i$ be the $E$-space of $\rho_i$ for $i=1,2$, choose a $G_L$-equivariant isomorphism $f:V_1\rightarrow V_2$. 

\begin{claim*}
${\rm dim}_E({\rm Hom}_{G_L}(V_1,V_2))=1$
\end{claim*}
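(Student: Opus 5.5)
We need ${\rm Hom}_{G_L}(V_1,V_2)\cong {\rm End}_{G_L}(V_1)$ via composition with the isomorphism $f$, and then show ${\rm End}_{G_L}(V_1)=E$. Concretely, $S\mapsto f^{-1}\circ S$ identifies ${\rm Hom}_{G_L}(V_1,V_2)$ with ${\rm End}_{G_L}(V_1)$, since $f$ is a $G_L$-equivariant isomorphism. So the claim reduces to showing that the commutant of $\rho_1(G_L)$ in ${\rm End}_E(V_1)$ is exactly the scalars $E$.

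To compute this commutant, I will use Zariski density, in the same spirit as the proof of Lemma \ref{1coho}. Since $L/K$ is finite Galois, $G_L$ is an open normal subgroup of finite index in $G_K$. Hence $\rho_1(G_L)$ has finite index in $\rho_1(G_K)$, and its Zariski closure $\overline{\rho_1(G_L)}^{\rm Zar}$ is a closed subgroup of finite index in $H_1=\overline{\rho_1(G_K)}^{\rm Zar}$. A closed subgroup of finite index contains the identity component. By hypothesis $H_1$ is connected, so $\overline{\rho_1(G_L)}^{\rm Zar}=H_1$.

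Commuting with a fixed endomorphism $T$ is a Zariski-closed condition on $g\in\GL_r$. Therefore
\[
Z_{{\rm End}_E(V_1)}(\rho_1(G_L))=Z_{{\rm End}_E(V_1)}(H_1(E))\supseteq Z_{{\rm End}_E(V_1)}(\rho_1(G_K)).
\]
The reverse inclusion $Z(\rho_1(G_K))\subseteq Z(\rho_1(G_L))$ is trivial, so the two centralizers coincide. By Burnside's theorem, absolute irreducibility of $\rho_1$ gives $E[\rho_1(G_K)]={\rm End}_E(V_1)$. The commutant of the full matrix algebra is $E$, so ${\rm End}_{G_L}(V_1)=E$ and $\dim_E {\rm Hom}_{G_L}(V_1,V_2)=1$.

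The main care point is the density step. One must check that the Zariski closure of $\rho_1(G_L)$, a subgroup of finite index in $\rho_1(G_K)$, is a finite-index closed subgroup of $H_1$. This holds because $\rho_1(G_K)$ is a finite union of cosets $g\rho_1(G_L)$, so $H_1$ is the finite union of the corresponding translates of $\overline{\rho_1(G_L)}^{\rm Zar}$. Connectedness of $H_1$ then forces equality. This is exactly where the connectedness hypothesis in (ii) enters. Without it, $\rho_1|_{G_L}$ could fail to be absolutely irreducible and the Hom space could be larger.
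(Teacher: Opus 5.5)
Your proof is correct and follows essentially the same route as the paper: you reduce via composition with $f$ to ${\rm End}_{G_L}(V_1)$, use connectedness of $H_1$ to see that the finite-index subgroup $\rho_1(G_L)$ is still Zariski dense (so its commutant equals that of $\rho_1(G_K)$), and conclude that this commutant is $E$ by absolute irreducibility; you do the last step via Burnside, the paper via Schur's lemma. One small slip: the inclusion in your display should read $Z_{{\rm End}_E(V_1)}(H_1(E))\subseteq Z_{{\rm End}_E(V_1)}(\rho_1(G_K))$, which is the nontrivial direction you actually need, whereas the $\supseteq$ you wrote is the same trivial inclusion you then call the ``reverse'' one.
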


One the claim holds, for $\sigma\in G_K$ we define $f_\sigma:=\rho_2(\sigma)f\rho_1(\sigma)^{-1}$. As $G_L\lhd G_K$, for $h\in G_L$ one can deduce that
$$h\cdot f_\sigma=\rho_2(\sigma)\rho_2(\sigma^{-1}h\sigma)f\rho_1(\sigma)^{-1}=\rho_2(\sigma)f\rho_1(\sigma^{-1}h\sigma)\rho_1(\sigma)^{-1}=f_\sigma\cdot h.$$
Thus $f_\sigma\in {\rm Hom}(V_1,V_2)$. By the claim, ${\rm Hom}(V_1,V_2)$ is the $E$-space spanned by $f$, hence there is an element $\chi(\sigma)\in E^*$ such that $f_\sigma=\chi(\sigma)f$. This defines a map
$$\chi:G_K\rightarrow E^*.$$
For $\sigma_1,\sigma_2\in G_K$, we check that 
$$\chi(\sigma_1\sigma_2)f=f_{\sigma_1\sigma_2}=\rho_2(\sigma_1)f_2\rho_1(\sigma_1)^{-1}=\chi(\sigma_2)f_{\sigma_1}=\chi(\sigma_1)\chi(\sigma_2)f.$$
Hence $\chi$ is a group homomorphism. Together with the fact that $\chi$ is continuous since $\sigma\mapsto f_\sigma$ is continuous, we know that $\chi$ is a character of $G_K$.  Moreover, from the property that $f_h=f$ for any $h\in G_L$, we know $\chi$ factors through the finite group ${\rm Gal}(L/K)$, so $\chi$ is a finite character.
Now from $f_\sigma=\chi(\sigma)f$ we get 
$$\rho_2(\sigma)f=f\chi(\sigma)\rho_1(\sigma)=f((\rho_1\otimes\chi)(\sigma)).$$
This means $f$ is a $G_K$-module isomorphism between $(V_1,\rho_1\otimes\chi)$ and $(V_2,\rho_2)$, hence $\rho_2\cong\rho_1\otimes\chi$.

\begin{proof}[proof of claim]

Choose a $G_L$-equivariant isomorphism $f:V_1\rightarrow V_2$. Consider the map
$$\eta:{\rm End}_{G_L}(V_1)\rightarrow {\rm Hom}_{G_L}(V_1,V_2),\ \ u\mapsto f\circ u.$$
One can easily see that $\eta$ is an $E$-isomorphism. Thus we turn to prove ${\rm dim}_E({\rm End}_{G_L}(V_1))=1$.

Since $H_1$ is connected, the finite index subgroup $\rho_1(G_L)\subset \rho_1(G_K)$ is still Zariski dense in $H_1$. Hence one can deduce that ${\rm End}_{G_L}(V_1)={\rm End}_{H_1}(V_1)={\rm End}_{G_K}(V_1)$, then Schur's lemma implies
$${\rm End}_{G_L}(V_1)={\rm End}_{G_K}(V_1)=E.$$
The proof is complete
\end{proof}
\end{proof}

\bibliographystyle{alpha}
\bibliography{Frobenius_trace_DM_rk2.bib}

\end{document}